\numberwithin{equation}{section}
\newcommand{\bfb}{\mathbf{b}}
\newcommand{\bfk}{\mathbf{k}}
\newcommand{\bfy}{\mathbf{y}}
\newcommand{\bfw}{\mathbf{w}}
\newcommand{\bfe}{\mathbf{e}}
\newcommand{\bfz}{\mathbf{z}}
\newtheorem{theorem}{Theorem}[section]
\newtheorem{lemma}[theorem]{Lemma}
\newtheorem{corollary}[theorem]{Corollary}
\newtheorem{proposition}[theorem]{Proposition}
\theoremstyle{definition}
\newtheorem{remark}[theorem]{{\bf Remark}}
\newtheorem{definition}[theorem]{Definition}
\newcommand{\pp}{\partial}
\newcommand{\rr}{\mathbb{R}}
\newcommand{\vm}{{\mathbf{m}}}
\newcommand{\vx}{{\mathbf{x}}}
\newcommand{\vp}{{\mathbf{p}}}
\newcommand{\vs}{{\mathbf{s}}}
\newcommand{\bfx}{\mathbf{x}}
\newcommand{\bfm}{\mathbf{m}}
\crefname{enumi}{}{}
\crefname{enumii}{}{}
\title[]{Entire monogenic functions of given proximate order\\ and continuous homomorphisms}
\author[F. Colombo]{Fabrizio Colombo}
\address{(F.C.)
Politecnico di Milano\\Dipartimento di Matematica\\Via E. Bonardi, 9\\20133
Milano, Italy}
\email{fabrizio.colombo@polimi.it}
\author[R.S. Krausshar]{R.S. Krausshar}
\address{(R.S.K) Chair of Mathematics\\
University of Erfurt\\ Nordh\"auser Stra{\ss}e 63\\ 99089 Erfurt \\Germany }
\email{soeren.krausshar@uni-erfurt.de }
\author[S. Pinton]{Stefano Pinton}
\address{(S.P.)
Politecnico di Milano\\Dipartimento di Matematica\\Via E. Bonardi, 9\\20133
Milano, Italy}
\email{stefano.pinton@polimi.it}
\author[I. Sabadini]{Irene Sabadini}
\address{(I.S.)
Politecnico di Milano\\Dipartimento di Matematica\\Via E. Bonardi, 9\\20133
Milano, Italy}
\email{irene.sabadini@polimi.it}
\begin{document}
	
	\maketitle
	
\begin{abstract}
Infinite order differential operators appear in different fields of mathematics and physics. In the past decade they turned out to play a crucial role in the theory of superoscillations and provided new insight in the study of the evolution as initial data for the Schr\"odinger equation.
Inspired by the infinite order differential operators arising in quantum mechanics, in this paper we investigate
the continuity of a class of infinite order differential operators acting on spaces of entire hyperholomorphic functions. Precisely, we consider homomorphisms acting on functions in the kernel of the Dirac operator.
For this class of functions, often called monogenic functions, we introduce the proximate order and prove some fundamental properties.
As important application we are able to characterize infinite order differential
operators that act continuously on spaces of monogenic entire functions.
\end{abstract}
\vskip 1cm
\par\noindent
 AMS Classification: 32A15, 32A10, 47B38.
\par\noindent
\noindent {\em Key words}: Infinite order differential operators,
Dirac operator,  proximate order of monogenic functions.
\vskip 1cm

\date{today}
\tableofcontents

\section{Introduction}\label{INTROD1}

Infinite order differential operators have been studied already since a long time.
In the recent years they turned out to be of fundamental importance in the study of the evolution of superoscillations as initial data for the Schr\"odinger  equation.
Superoscillatory functions arise in several areas of science and technology,
for example in quantum mechanics they are the outcome
of Aharonov's weak values, see \cite{aav,abook}.
To study their time evolution as initial data of quantum field equations represents an important problem in quantum mechanics.

The study of the evolution of superoscillatory functions under the Schr\"odinger equation
is highly non-trivial. A natural functional analytic setting is
the space of entire functions with certain growth conditions.
In fact, the Cauchy problem for the Schr\"odinger equation with superoscillatory initial data leads
to  infinite order differential operators of the  type
$$
\mathcal{U}(t,z;\partial_z)=\sum_{m=1}^\infty u_m(t,z)\partial_z^m,
$$
where the coefficients $u_m(t,z)$ depend on the Green's function
of the time dependent Schr\"odinger equation for a given potential $V$,
 $t$ is the time variable and $z$ is the complexification of the space variable.
For more details, see for example the monograph \cite{acsst5} and \cite{ABCS19,acsst3,acsst6,KGField,Jussi,genHYPAA,genHYP,PETER}.

\medskip
Another application of infinite order differential operators
within the scope of the theory of superoscillatory functions
is the extension of this theory to several superoscillating variables, see \cite{JFAA}.

\medskip
For $p\geq 1$ the natural spaces on which operators such as
 $\mathcal{U}(t,z;\partial_z)$ act
are the spaces of entire functions with growth order either order lower than $p$ or equal to $p$ and finite type. In other words, they  consist of entire functions $f$ for which there exist constants $B, C >0$ such that
$
|f(z)|\leq C e^{B|z|^p}.
$

\medskip
The problem to extend infinite order differential operators
to the hypercomplex setting is treated
in the recent paper \cite{ANMATHPHY} where the authors investigate the continuity of a class of infinite order differential operators
acting on spaces of entire hyperholomorphic functions
that include monogenic functions.
In  \cite{ANMATHPHY} we find the hypercomplex version of some results obtained in \cite{ACSS18,QS2}; the class of monogenic functions is the most delicate case to investigate.
Even though the classical exponential function is not in the kernel of the Dirac operator, monogenic functions with exponential bounds play a crucial role in the study of continuity of a class of infinite order differential operators in the hypercomplex settings.
The hypercomplex setting  is non-trivial and requires
some efforts because of the structure of monogenic functions
and of the fact that they admit series expansions in terms of
  the so called Fueter polynomials.
 Precisely, the Fueter's polynomials $V_k(x)$, see for instance formula $(4)$ of \cite{AlmeidaKra2005} page $794$ or elsewhere, are
  defined by
$$
V_k(x):= \frac{k!}{|k|!}\sum_{\sigma\in perm(|k|)} z_{j_{\sigma(1)}}z_{j_{\sigma(2)}} \ldots z_{j_{\sigma(|k|)}},
$$
where $k$ represents a multi-index. They
play the role of the sequence of the complex polynomials $(z^n)_n$ when $z$ is a complex variable.
In order to preserve the monogenicity we need a special product, called Cauchy-Kowalewski product, for short CK-product,
that does not coincide with the ordinary pointwise product.
The  CK-product  of two left entire monogenic $f$ and $g$ is denoted by
$f \odot_L g$ is given in Definition \ref{CKPRD} in terms of the Fueter polynomials. A similar definition is given for right monogenic functions.

\medskip
In the paper \cite{ANMATHPHY} we obtained the following result
regarding  monogenic functions.
Let $p\geq 1$ and set $\mathbb{N}_0=\mathbb{N} \cup\{0\}$.
Let $(u_m)_{m\in (\mathbb{N}_0)^n }:\mathbb{R}^{n+1}\to \mathbb{R}_n$ be  left entire monogenic functions
such that for every $\varepsilon>0$ there exist $B_\varepsilon >0$, $C_\varepsilon>0$ for which
\begin{equation}
 |u_m(x)|\leq C_\varepsilon \frac{\varepsilon^{|m|}}{(|m|!)^{1/q}}\exp(B_\varepsilon|x|^p), \ \ \ {\rm for \ all}  \ \ \ m\in (\mathbb{N}_0)^n,
 \end{equation}
where $1/p+1/q=1$ and where we set $1/q:=0$ when $p=1$, and $m$ is a multi-index.
We considered the formal infinite order differential operator
\begin{equation}\label{UL}
U_L(x,\pp_{x})f(x):=\sum_{|m|=0}^\infty u_m(x)\odot_L \pp_{x}^m f(x),
\end{equation}
for left entire monogenic functions $f$
where $\pp_{x}^m:= \pp_{x_1}^{m_1}\dots\pp_{x_n}^{m_n}$ and  $\odot_L$ denotes the CK-product.
Then for  $p\geq 1$, we proved that the operator
$U_L(x,\pp_{x})$
 acts continuously on the space of left monogenic functions with the condition
 $|f(x)|\leq C e^{B|x|^p}.$

\medskip
In this paper we
characterize the continuous homomorphisms of type (\ref{UL}) acting on monogenic functions.
In order to do this we introduce proximate orders
for monogenic functions and we study some fundamental properties. After that we investigate the monogenic counterpart
of the differential operator representation of continuous homomorphisms
between the spaces of entire functions of given proximate order proved
by T. Aoki and co-authors in \cite{Aoki2,AIO20}.

\medskip
{\em The plan of the paper.}
Section \ref{INTROD1} provides an introduction.
In Section \ref{SEC2} we state some preliminary results on monogenic functions.
 In Section \ref{Prel}
we study  entire monogenic  functions where the growth is determined by a proximate order. We establish some important  properties on the proximate order of monogenic functions.
  In Section \ref{SEC3} we then apply these results to characterize the continuous homomorphisms. We observe that Section \ref{Prel} and Section \ref{SEC3} closely follow Section $3$ and Section $4$ of \cite{AIO20}. The points where the adaptation to the case of monogenic functions are not straightforward are Theorem \ref{tsupernovissima1}, Theorem \ref{main1} and Theorem \ref{main2}. In particular, in the last two theorems we have used the new Lemmas \ref{l11} and \ref{l12}.

\section{Preliminary results on monogenic functions}\label{SEC2}

In this section we recall some results on  monogenic functions, whose proofs can be found in
\cite{BDS82}.
We recall that $\rr_n$ is the real Clifford algebra over $n$ imaginary units $e_1,\ldots ,e_n$.
The element
 $(x_0,x_1,\ldots,x_n)\in \mathbb{R}^{n+1}$ will be identified with the paravector
$
 \bfx=x_0+\underline{x}=x_0+ \sum_{\ell=1}^nx_\ell e_\ell
$
and the real part $x_0$ of $\bfx$ will also be denoted by ${\rm Re}(\bfx)$.
An element $y$ in $\mathbb{R}_{n}$, is called a {\em Clifford number}.
If $A$ is an element in the power set $P(1,\ldots ,n)$,
then the element $e_{i_1}\ldots e_{i_r}$ can be written as $e_{i_1...i_r}$ or, in short, $e_A$.
Thus, we can write a Clifford number as
$
y=\sum_Ay_A e_A.
$
Possibly using the defining relations $e_i^2=-1$, $e_ie_j+e_je_i=0$, $i,j\in\{1,\ldots, n\}$, $i\not=j$, we will order the indices in $A$ as $i_1 < \ldots <i_r$. When $A=\emptyset$ we set $e_\emptyset=1$.
The Euclidean norm of an element $y\in \mathbb{R}_n$ is
given by $|y|^2=\sum_{A} |y_A|^2$,
in particular the norm of the paravector $\bfx\in\mathbb{R}^{n+1}$ is $|\bfx|^2=x_0^2+x_1^2+\ldots +x_n^2$.

\begin{definition}
We call $f:\mathbb R^{n+1}\to \mathbb R_n$ an entire left (right) monogenic function if $f\in \mathcal{C}^{1}(\mathbb R^{n+1},\mathbb R_n)$ and $\mathcal{D}f\equiv 0$ (resp. $f\mathcal{D}\equiv 0$) where
$$ \mathcal{D}:=\partial_{x_0}+\sum_{i=1}^n e_i \partial_{x_i}. $$
\end{definition}
We will restrict ourselves to consider only the set of the left monogenic functions since for right monogenic functions analogous computations hold.

We will denote the set of left entire monogenic function by $\mathcal M_L(\mathbb R^{n+1})$.
The Cauchy formula
for monogenic functions and their derivatives, is computed on the boundary
of $U \subset \mathbb{R}^{n+1}$ where $\overline{U}$ is contained in the set of monogenicity of $f$.

\begin{theorem}
 If $f$ is a left monogenic function in a neighborhood of $\overline U$ then for any $\bfx\in U$
$$ f(\bfx)= \frac 1{A_{n+1}}\int_{\partial U} q_0(\bfx-\xi) d\tau(\xi) f(\xi)$$
where $A_{n+1}$ is the $n$-dimensional surface area of the $n+1$-dimensional unit ball,
$$ d\tau(\xi)=\sum_{j=0}^n(-1)^j e_j \widehat{d\xi_j} \quad\textrm{and}\quad q_0(\bfx)=\frac{\bar\bfx}{\|\bfx\|^{n+1}}$$
with
$$\widehat{d\xi_j}=d\xi_0\wedge\dots\wedge d\xi_{j-1}\wedge d\xi_{j+1}\wedge\dots\wedge d\xi_n.$$
\end{theorem}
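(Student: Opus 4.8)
The plan is to prove this by the classical method (cf.\ \cite{BDS82}): exhibit $q_0$ as a two-sided monogenic fundamental solution of $\mathcal D$, use a Stokes-type vanishing identity to replace the integral over $\partial U$ by one over an arbitrarily small sphere centred at $\bfx$, and then evaluate the limit. First I would record the two structural facts. \emph{(i)} The kernel $q_0(\bfy)=\bar\bfy/\|\bfy\|^{n+1}$ is simultaneously left and right monogenic on $\rr^{n+1}\setminus\{0\}$; this is a direct differentiation, using that each component of $q_0$ is, up to a constant, a first derivative of the Newtonian potential $\|\bfy\|^{-(n-1)}$ (or $\log\|\bfy\|$ when $n=1$), which is harmonic. \emph{(ii)} If $g$ is right monogenic and $f$ is left monogenic on a neighbourhood of $\overline\Omega$, with $\Omega$ having piecewise $\mathcal C^1$ boundary, then $\int_{\partial\Omega} g(\xi)\,d\tau(\xi)\,f(\xi)=0$. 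This follows from Stokes' theorem applied to the $n$-form $g\,d\tau\,f$, once one verifies the pointwise identity
\[
d\bigl(g\,d\tau\,f\bigr)=\bigl((g\mathcal D)f+g(\mathcal D f)\bigr)\,d\xi_0\wedge d\xi_1\wedge\cdots\wedge d\xi_n ,
\]
by substituting $d\tau=\sum_{j=0}^n(-1)^j e_j\,\widehat{d\xi_j}$, differentiating, and using $d\xi_j\wedge\widehat{d\xi_j}=(-1)^j\,d\xi_0\wedge\cdots\wedge d\xi_n$ together with $e_0=1$; the right-hand side is $0$ under the monogenicity hypotheses.

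With these in hand, fix $\bfx\in U$ and choose $\varepsilon>0$ so small that $\overline{B(\bfx,\varepsilon)}\subset U$. Apply \emph{(ii)} on $U_\varepsilon:=U\setminus\overline{B(\bfx,\varepsilon)}$ with $g(\xi)=q_0(\bfx-\xi)$, which is right monogenic on $U_\varepsilon$ because $q_0$ is right monogenic away from the origin and precomposition with $\xi\mapsto\bfx-\xi$ preserves right monogenicity (chain rule). Since $\partial U_\varepsilon$ consists of $\partial U$ and $\partial B(\bfx,\varepsilon)$ with opposite induced orientations on the inner sphere, \emph{(ii)} yields
\[
\int_{\partial U} q_0(\bfx-\xi)\,d\tau(\xi)\,f(\xi)=\int_{\partial B(\bfx,\varepsilon)} q_0(\bfx-\xi)\,d\tau(\xi)\,f(\xi).
\]
On $\partial B(\bfx,\varepsilon)$ write $\xi=\bfx+\varepsilon\omega$ with $\omega\in S^n$; then $q_0(\bfx-\xi)=\mp\bar\omega/\varepsilon^{n}$, while $d\tau(\xi)$ restricted to the sphere is $\mathbf{n}(\xi)\,dS(\xi)$ with unit normal $\mathbf{n}(\xi)=\pm\omega$ and $dS(\xi)=\varepsilon^{n}\,dS(\omega)$. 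Using $\bar\omega\,\omega=\|\omega\|^{2}=1$, the integrand collapses and, after reconciling the orientation signs, the right-hand side equals $\int_{S^n} f(\bfx+\varepsilon\omega)\,dS(\omega)$; letting $\varepsilon\to0$ and invoking continuity of $f$ gives $A_{n+1}\,f(\bfx)$. Dividing by $A_{n+1}$ finishes the proof.

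The mathematical content here is entirely standard, so the one point that genuinely needs care is the bookkeeping of orientations and signs: verifying the exterior-derivative identity above with the right sign, and tracking the orientation of the small sphere as a boundary component of $U_\varepsilon$ together with the sign of $q_0$ evaluated at $\bfx-\xi$, so that the normalising constant comes out exactly as $A_{n+1}$ (with no stray factor $\pm1$). In particular this is the place to double-check, against the orientation convention adopted for $\partial U$, whether the kernel is to be evaluated at $\bfx-\xi$ or at $\xi-\bfx$. Everything else — monogenicity of $q_0$, the Stokes step, and the final continuity argument — is routine.
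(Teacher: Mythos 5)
The paper does not actually prove this theorem; Section~2 opens by saying that the results it states are recalled from \cite{BDS82}, so there is no in-text argument to compare against. Your plan is exactly the classical one found there (and in every treatment of the Clifford Cauchy integral formula): $q_0$ is two-sided monogenic off the origin as the $\bar{\mathcal D}$-derivative of the Newtonian potential, the form $g\,d\tau\,f$ is closed when $g$ is right and $f$ is left monogenic, Stokes on $U\setminus\overline{B(\bfx,\varepsilon)}$ passes the integral to a small sphere, and $q_0\cdot\mathbf n$ collapses to a scalar because $\bar\omega\,\omega=1$. That is all correct, and your computation of $d(g\,d\tau\,f)$ via $d\xi_j\wedge\widehat{d\xi_j}=(-1)^j\,dV$ gives the identity you wrote with no stray sign.

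You were right, however, to single out the orientation/sign bookkeeping as the one genuinely delicate point, and it is worth making the issue concrete rather than leaving it as ``$\pm$'' and ``$\mp$.'' With the convention $d\tau=\sum_{j}(-1)^j e_j\,\widehat{d\xi_j}$, a short check in the plane ($n=1$, unit circle traversed counterclockwise) shows $d\tau=\mathbf n_{\mathrm{out}}\,dS$, i.e.\ $d\tau$ carries the \emph{outward} unit normal. On $\partial B(\bfx,\varepsilon)$ one then has $d\tau=\omega\,\varepsilon^n\,dS(\omega)$ and $q_0(\bfx-\xi)=q_0(-\varepsilon\omega)=-\bar\omega/\varepsilon^n$, because $q_0$ is an odd function ($\overline{-\bfy}=-\bar\bfy$). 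The product is therefore $-dS(\omega)$, and the limit gives $-A_{n+1}f(\bfx)$. In other words, with the outward-normal reading of $d\tau$ the formula closes up with the kernel evaluated at $\xi-\bfx$, exactly as in \cite{BDS82}; the statement as printed in the paper, with $q_0(\bfx-\xi)$, carries an extra factor $-1$. This is not an error in your argument—your method is the right one and would surface the sign—but when you ``reconcile the orientation signs'' you should land on $q_0(\xi-\bfx)$ (equivalently, insert a minus sign in front of the integral as written), rather than expect the signs to cancel to match the displayed formula verbatim.
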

\begin{theorem}
Let $\bfm:=(m_1,\dots, m_n)\in \mathbb N_0^n$ be a multi-index. We denote by $|\bfm|=m_1+\dots+m_n$. If $f$ is a left monogenic function in a ball $\|\bfx\|< R$, then for all $\|\bfx\|<r$ with $0<r<R$, its Taylor series expansion is given by
$$ f(\bfx)=\sum_{|\bfm|=0}^\infty V_{\bfm}(\bfx) a_{\bfm} $$
where $V_{\bfm}$ are the Fueter polynomials and
$$a_{\bfm}=\frac 1{\bfm ! A_{n+1}}\int_{\|\xi\|\leq r}q_{\bfm}(\xi) d\tau(\xi) f(\xi)$$
where $q_{\bfm}(\bfx)=\partial_{x_1}^{m_1}\dots\partial_{x_n}^{m_n}q_0(\bfx)$. We have the following Cauchy inequality
$$
\|a_{\bfm}\|\leq c(n,\bfm) \frac{M(r,f)}{r^{|\bfm|}},
$$
where
$$
c(n,\bfm):=\frac{n(n+1)\cdot\dots\cdot(n+|\bfm|-1)}{\bfm!}
$$
 and $M(r,f)=\sup_{\|\bfx\|=r} \|f(\bfx)\|$. In particular, setting $\bfm:=(m_0,\dots, m_n)\in\mathbb N^{n+1}_0$, we have
\begin{equation}\label{estimate1}
\|\partial_{x_0}^{m_0}\dots \partial_{x_n}^{m_n}q_0(\bfx)\|\leq \frac{n(n+1)\cdots(n+|\bfm|-1)}{\|\bfx\|^{n+|\bfm|}}.
\end{equation}
\end{theorem}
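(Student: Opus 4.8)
The plan is to read the statement off the Cauchy formula of the previous theorem, applied to the ball $U=\{\bfx:\|\bfx\|<r\}$. First I would fix $0<r<R$ and write, for $\|\bfx\|<r$,
$$
f(\bfx)=\frac1{A_{n+1}}\int_{\|\xi\|=r}q_0(\bfx-\xi)\,d\tau(\xi)\,f(\xi),
$$
and then expand the Cauchy kernel $q_0(\bfx-\xi)$ in the variable $\bfx$. The key point is that, for fixed $\xi$ with $\|\xi\|=r$, the map $\bfx\mapsto q_0(\bfx-\xi)$ is left monogenic on $\{\|\bfx\|<\|\xi\|\}$, hence is the Cauchy--Kowalewski extension of its restriction to $\{x_0=0\}$; Taylor expanding that restriction in the real variables $x_1,\dots,x_n$ and applying the CK-extension term by term converts the monomials into Fueter polynomials and produces a normally convergent expansion $q_0(\bfx-\xi)=\sum_{|\bfm|\ge0}V_{\bfm}(\bfx)\,c_{\bfm}(\xi)$. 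Using the defining properties of the $V_{\bfm}$ (in particular $\partial_{x_j}V_{\bfm}=m_jV_{\bfm-e_j}$, $V_{\mathbf 0}=1$ and $V_{\bfm}(\mathbf 0)=0$ for $|\bfm|\ge1$) one then identifies $c_{\bfm}(\xi)=\tfrac1{\bfm!}\big(\partial_{x_1}^{m_1}\cdots\partial_{x_n}^{m_n}q_0\big)(-\xi)$, which by the odd parity of $q_0$ equals $\tfrac1{\bfm!}q_{\bfm}(\xi)$ up to a sign absorbed by the adopted normalization of $V_{\bfm}$ and orientation of $\partial U$.

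Since this kernel expansion converges normally in $\bfx$ on compact subsets of $\{\|\bfx\|<r\}$, uniformly in $\xi$ on the sphere $\|\xi\|=r$, I may substitute it into the Cauchy formula and integrate term by term, obtaining
$$
f(\bfx)=\sum_{|\bfm|\ge0}V_{\bfm}(\bfx)\Big(\frac1{\bfm!A_{n+1}}\int_{\|\xi\|=r}q_{\bfm}(\xi)\,d\tau(\xi)\,f(\xi)\Big)=\sum_{|\bfm|\ge0}V_{\bfm}(\bfx)\,a_{\bfm}.
$$
The coefficient integrals are independent of $r\in(0,R)$ by Cauchy's theorem, so the resulting series is the monogenic Taylor series of $f$ and converges on all of $\{\|\bfx\|<r\}$ (in fact on $\{\|\bfx\|<R\}$), once the coefficient estimate below is available.

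For the Cauchy inequality I would first establish the kernel bound $\|q_{\bfm}(\xi)\|\le n(n+1)\cdots(n+|\bfm|-1)\,\|\xi\|^{-n-|\bfm|}$ by induction on $|\bfm|$, differentiating the explicit kernel $q_0(\xi)=\bar\xi/\|\xi\|^{n+1}$: the base case is $\|q_0(\xi)\|=\|\xi\|^{-n}$, and the inductive step exploits that $q_0$ has the special form (conjugate paravector)$\times$(scalar), a structure preserved by each $\partial_{\xi_j}$, so that one differentiation raises the exponent of $\|\xi\|^{-1}$ by one and multiplies the constant by a factor $\le n+|\bfm|$; running the same computation with $x_0$-derivatives included yields the ``in particular'' estimate \eqref{estimate1}. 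Granting this, and using that $q_0(\bfx-\xi)$ and $d\tau(\xi)$ are paravector-valued up to a positive scalar, so that $\|q_{\bfm}(\xi)\,d\tau(\xi)\,f(\xi)\|\le\|q_{\bfm}(\xi)\|\,\|f(\xi)\|\,dS(\xi)$, together with the area $A_{n+1}r^{n}$ of $\{\|\xi\|=r\}$ in $\mathbb R^{n+1}$, I obtain
$$
\|a_{\bfm}\|\le\frac1{\bfm!A_{n+1}}\cdot A_{n+1}r^{n}\cdot\frac{n(n+1)\cdots(n+|\bfm|-1)}{r^{n+|\bfm|}}\cdot M(r,f)=c(n,\bfm)\,\frac{M(r,f)}{r^{|\bfm|}}.
$$

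The step I expect to be the main obstacle is the kernel expansion together with its sharp estimate: identifying the homogeneous monogenic terms of $q_0(\bfx-\xi)$ with $V_{\bfm}(\bfx)\,\tfrac1{\bfm!}q_{\bfm}(\xi)$ rests squarely on the Cauchy--Kowalewski machinery behind the Fueter polynomials, and the constant $n(n+1)\cdots(n+|\bfm|-1)$ is finer than what any soft argument through an abstract Cauchy estimate would deliver --- it is precisely here that the explicit form of $q_0$ must be used. The reduction to the sphere, the interchange of sum and integral, and the final arithmetic are then routine.
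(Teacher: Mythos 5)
The paper does not actually prove this theorem: it is quoted as background and attributed to Brackx--Delanghe--Sommen \cite{BDS82}, so there is no in-paper proof to compare against. Your outline is the classical Clifford-analytic route (expand the Cauchy kernel in Fueter polynomials, integrate term by term, combine with an explicit pointwise bound on the differentiated kernel) and is the same scheme used in that reference and in the growth-order literature cited here.

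The one place your sketch is genuinely short is the kernel estimate \eqref{estimate1}, which is the real content of the Cauchy inequality. You say the induction ``multiplies the constant by a factor $\leq n+|\bfm|$'' because $q_0$ is (conjugate paravector)$\times$(scalar) and this structure is preserved. The \emph{value} structure (paravector-valuedness) is indeed preserved, but the estimate does not follow from that observation. Already for one derivative,
$\partial_{x_j} q_0(\bfx) = -e_j\,\|\bfx\|^{-(n+1)} - (n+1)\,x_j\,\bar\bfx\,\|\bfx\|^{-(n+3)}$,
a sum of two terms; the triangle inequality gives the bound $(1+(n+1))\|\bfx\|^{-(n+1)} = (n+2)\|\bfx\|^{-(n+1)}$ on the unit sphere, which is \emph{worse} than the claimed $n\|\bfx\|^{-(n+1)}$. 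The sharp constant comes only from actually computing the Euclidean norm of the sum (the two terms are nearly orthogonal: a short calculation gives $\|\partial_{x_j}q_0(\bfx)\|^2 = \big(1+(n^2-1)x_j^2\big)\|\bfx\|^{-2(n+1)} \leq n^2\|\bfx\|^{-2(n+1)}$ on the unit sphere), and for higher $|\bfm|$ the bookkeeping is correspondingly more delicate. So ``preserved structure $\Rightarrow$ constant multiplies by $n+|\bfm|$'' is not a valid inductive step; you either need to carry out the norm computation with the precise polynomial structure of $q_\bfm$, or cite the estimate itself as an input from \cite{BDS82}.

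Two smaller points. First, the coefficient you extract is $c_\bfm(\xi) = \tfrac1{\bfm!}\,(\partial^\bfm q_0)(-\xi)$, and since $q_0$ is odd while each derivative flips parity, $q_\bfm(-\xi) = (-1)^{|\bfm|+1}q_\bfm(\xi)$; this sign alternates with $|\bfm|$ and therefore cannot be ``absorbed by the orientation of $\partial U$'' or by a fixed normalization of $V_\bfm$ as you suggest. You should either keep the $(-1)^{|\bfm|}$ in the expansion (it is harmless for the inequality, which is what matters downstream) or note that the quoted convention for $q_\bfm$ effectively builds it in. Second, the domain of integration in the displayed formula for $a_\bfm$ should be the sphere $\|\xi\|=r$, not the ball $\|\xi\|\leq r$ (the $n$-form $d\tau$ is integrated over an $n$-dimensional surface); this is a misprint in the statement, and your proof correctly uses the sphere.
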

\begin{definition}\label{CKPRD}
Let $f,\, g\in\mathcal M_L(\mathbb R^{n+1})$ (resp. $f,\, g\in\mathcal M_R(\mathbb R^{n+1})$). Using their Taylor series representation
$$ f(\bfx)=\sum_{|\bfm|=0}^{\infty} V_{\bfm}(\bfx) a_{\bfm},\quad \textrm{(resp. $f(\bfx)=\sum_{|\bfm|=0}^{\infty} a_{\bfm} V_{\bfm}(\bfx) $)} $$
and
$$ g(\bfx)=\sum_{|\bfm|=0}^{\infty} V_{\bfm}(\bfx) b_{\bfm},\quad \textrm{(resp. $g(\bfx)=\sum_{|\bfm|=0}^{\infty} b_{\bfm} V_{\bfm}(\bfx) $)} $$
we define
$$ f\odot_L g:= \sum_{|\bfm|=0}^{\infty}\sum_{|\bfk|=0}^{\infty} V_{\bfm+\bfk}(\bfx) a_\bfm b_\bfk \quad\textrm{$\left( \textrm{resp. } f\odot_R g:= \sum_{|\bfm|=0}^{\infty}\sum_{|\bfk|=0}^{\infty}  a_\bfm b_\bfk V_{\bfm+\bfk}(\bfx) \right)$}.$$
\end{definition}

\medskip
We mention that a different product, that is defined just for the subclass of axially monogenic function
(see \cite{ANUNO} and \cite{ANDUE}), can also be used to define infinite order differential operators in the monogenic setting.

\medskip
Next, following \cite{AlmeidaKra2005}, we recall the definition of the standard growth order of an entire monogenic function:
\begin{definition}
Let $f: \mathbb{R}^{n+1} \to \mathbb{R}_n$ be an entire left monogenic function. Then its growth order is said to be
$$
\rho = \rho(f) = \limsup\limits_{r \to +\infty} \frac{\log^+ \log^+ (M(r,f))}{\log r},
$$
where $\log^+(r) = \max\{0,\log(r)\}$.
\end{definition}
It may occur that $0 \le \rho \le +\infty$. For the case where $0 < \rho < +\infty$ Constales et al. defined in \cite{CDK07bis} the growth type of an entire monogenic function of growth order $\rho$ by
$$
\sigma = \sigma(f) = \limsup\limits_{r \to \infty+} \frac{\log^+ M(r,f)}{r^{\rho}}.
$$
As shown in \cite{CDK07} the growth order of a monogenic function can be directly computed by its Taylor coefficients, namely by
$$
\rho(f) = \limsup\limits_{|{\bf m}| \to + \infty} \frac{|{\bf m}| \log |{\bf m}|}{-\log\Big|\frac{1}{c(n,{\bf m}) }a_{\bf m}  \Big|}.
$$
Similarly, as shown in \cite{CDK07bis} the growth type can be expressed by
$$
\sigma(f) = \frac{1}{e\rho} \limsup\limits_{r \to +\infty}|{\bf m}|\Big(|a_{\bf m}| \Big)^{\frac{\rho}{|{\bf m}|}}.
$$
\begin{remark}
To also get a finer classification of functions with slow growth $\rho=0$ and fast growth $\rho=\infty$ Seremeta \cite{Seremeta70} and Shah \cite{Shah77} introduced the notion of generalized growth in the complex analysis setting which has been generalized to the monogenic setting in \cite{CDK2014,KB2012,KB2013,SK2013}.
More precisely the authors considered functions $\alpha(\cdot)$, $\beta(\cdot)$ and $\gamma(\cdot)$ satisfying particular conditions mentioned concisely for example in Definition~1 of \cite{CDK2014} and introduced the notions of the generalized growth and type in the way
$$
\rho_{\alpha,\beta}(f) = \limsup\limits_{r \to +\infty} \frac{\alpha(\log^+ M(r,f))}{\beta(\log(r))}
$$
and
$$
\sigma_{\alpha,\beta,\gamma}(f) = \limsup\limits_{r \to +\infty} \frac{\alpha(\log^+ M(r,f))}{\beta((\gamma(r))^{\rho_{\alpha,\beta}})},
$$
(see for instance Definition~2 of \cite{CDK2014} or \cite{KB2013}).
In the particular case where $\alpha(r)=\log(r)$ and where $\beta$ is the identity function one re-obtains the classical definition of the growth order and growth type. Another generalization of the classical growth order that includes the classical growth order as a special case, and even the generalized growth orders under particular conditions, is the definition of the proximate growth order.
\end{remark}

We introduce:
\begin{definition}
A differentiable function $\rho(r)\geq 0$ defined for $r\geq 0$ is said to be a proximate order for the order $\rho\geq 0$ if it satisfies
\begin{enumerate}
\item $\lim_{r\to+\infty} \rho(r)=\rho$,
\item $\lim_{r\to+\infty} \rho '(r)r\ln(r)=0.$
\end{enumerate}
\end{definition}
We observe that for any proximate order function $\rho(r)$ there exists a positive constant $r_0>0$ such that for any $r>r_0$ the function $r^{\rho(r)}$ is strictly increasing and tending to $+\infty$.
\begin{definition}\label{norm}
For any proximate order function $\rho(r)$ we can always take another proximate order function, called the normalization of the proximate order function $\rho(r)$, $\hat{\rho}(r)$ such that there exists a constant $r_1>0$ for which  $\hat{\rho}(r)=\rho(r)$ for any $r\geq r_1$ and $r^{\hat{\rho}(r)}$ is strictly increasing on $r>0$ and maps the interval $(0,+\infty)$ to $(0,+\infty)$.
\end{definition}
We denote by $\varphi:(0,+\infty)\to(0+\infty)$ the inverse function of the function $t=r^{\hat{\rho}(r)}$. Moreover, we set
\begin{equation}\label{c1}
G_q=G_{\hat{\rho},q}:=\frac{\varphi(q)^q}{(e\rho)^{q/\rho}},\quad\textrm{for $q\in\mathbb N$.}
\end{equation}
Let $\rho(r)$ be a proximate order for a positive order $\rho>0$. For any $\sigma>0$, we consider the Banach space
$$ A_{\rho,\sigma}:=\{f\in\mathcal M_L(\mathbb R^{n+1}):\, \|f\|_{\rho,\sigma}:=\sup_{\mathbf{x}\in\mathbb R^{n+1}} \|f(\bfx)\|\exp(-\sigma\|\bfx\|^{\rho(\|\bfx\|)})<+\infty\} $$
with the norm $\|\cdot\|_{\rho,\sigma}$. Moreover, we use the following notation: for any $\bfm=(m_1,\dots, m_n)\in\mathbb N^n$ we write
$$\partial^\bfm_\bfx f(\bfx):=\partial_{x_1}^{m_1}\dots\partial_{x_n}^{m_n} f(\bfx).$$

We recall some basic properties of proximate orders that will be used in the following.
 The proofs of these results can be found in Section 2 of \cite{AIO20} and in the references therein.
\begin{lemma}
There exist constants $k>0$ and $B>0$ depending only on $\hat{\rho}$ such that
$$ \forall r>0,\, \forall s>0,\quad (r+s)^{\hat{\rho}(r+s)}\leq k(r^{\hat{\rho}(r)}+s^{\hat{\rho}(s)})+B. $$
Precisely speaking, we can choose $k$ depending only on the order $\rho=\lim_{r\to+\infty} \hat{\rho}(r)$
\end{lemma}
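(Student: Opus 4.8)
\emph{Plan of proof.} Set $V(r):=r^{\hat\rho(r)}$ for $r>0$. By Definition~\ref{norm}, $V$ is strictly increasing on $(0,+\infty)$ and maps $(0,+\infty)$ onto $(0,+\infty)$; in particular $V(r)\to 0$ as $r\to 0^+$, so $V$ is bounded on every bounded subinterval of $[0,+\infty)$. The heart of the matter is the asymptotic relation $V(2t)/V(t)\to 2^{\rho}$, i.e. the fact that $r\mapsto r^{\hat\rho(r)}$ is regularly varying of index $\rho$. To see this I would write $\log V(r)=\hat\rho(r)\log r$ and integrate its derivative: for $0<t$,
$$
\log V(2t)-\log V(t)=\int_t^{2t}\Bigl(\frac{\hat\rho(u)}{u}+\hat\rho'(u)\log u\Bigr)\,du .
$$
The first summand contributes $\hat\rho(\xi_t)\log 2$ for some $\xi_t\in(t,2t)$ by the mean value theorem for integrals, hence tends to $\rho\log 2$ as $t\to+\infty$ by property~(1) of a proximate order. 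For the second summand, property~(2) gives $u\,\hat\rho'(u)\log u\to 0$, so $\hat\rho'(u)\log u=o(1/u)$ and $\int_t^{2t}\hat\rho'(u)\log u\,du=o(1)$ as $t\to+\infty$. Therefore $V(2t)/V(t)\to 2^{\rho}$, and there is $r^{*}>0$ such that
$$
\forall\, t\ge r^{*},\qquad V(2t)\le k\,V(t),\qquad k:=2^{\rho+1},
$$
a constant depending only on the order $\rho$.

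Next I would conclude by a case split. By symmetry assume $r\le s$, so that $r+s\le 2s$. If $s\ge r^{*}$, then monotonicity of $V$ together with the inequality just obtained gives
$$
(r+s)^{\hat\rho(r+s)}=V(r+s)\le V(2s)\le k\,V(s)\le k\bigl(V(r)+V(s)\bigr).
$$
If instead $s< r^{*}$, then $r+s< 2r^{*}$ and, since $V$ is bounded on $[0,2r^{*}]$, setting $B:=\sup_{0\le u\le 2r^{*}}V(u)<+\infty$ yields $V(r+s)\le B$. Combining the two cases,
$$
\forall\,r>0,\ \forall\,s>0,\qquad (r+s)^{\hat\rho(r+s)}\le k\bigl(r^{\hat\rho(r)}+s^{\hat\rho(s)}\bigr)+B,
$$
with $k=2^{\rho+1}$ depending only on $\rho$ and $B$ depending on $\hat\rho$ through the threshold $r^{*}$.

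The only genuinely delicate step is the limit $V(2t)/V(t)\to 2^{\rho}$; everything else is bookkeeping about monotonicity and boundedness near the origin. One could alternatively invoke the general theory of regularly varying functions (Karamata's theorem) to get this limit for free, but the elementary integral estimate above uses only the two defining properties of a proximate order and keeps the dependence of $k$ on $\rho$ (and of $B$ on $\hat\rho$) completely transparent, which is what the statement requires.
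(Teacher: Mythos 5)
The paper does not prove this lemma itself: it merely cites Section~2 of \cite{AIO20} (and the references therein, essentially Lelong--Gruman) for these basic properties of proximate orders. Your proof is correct and is the standard argument --- establishing that $V(r)=r^{\hat\rho(r)}$ is regularly varying of index $\rho$ by integrating $\tfrac{d}{du}\log V(u)=\hat\rho(u)/u+\hat\rho'(u)\log u$ over $[t,2t]$ and using property~(2) of a proximate order to kill the second summand, then splitting into the large-$s$ case (monotonicity of the normalization plus $V(2s)\le kV(s)$) and the bounded case (absorbed into $B$) --- and it correctly yields $k$ depending only on $\rho$ with $B$ carrying the dependence on $\hat\rho$ through the threshold $r^{*}$.
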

\begin{lemma}\label{lsupersupernova2}
The sequence $\{G_p\}_p$ is supermultiplicative, that is,
$$G_pG_q\leq G_{p+q},\quad \textrm{for any $p,\, q\in\mathbb N$.} $$
\end{lemma}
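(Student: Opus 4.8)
The plan is to reduce the claimed supermultiplicativity to an elementary monotonicity property of the inverse function $\varphi$. First I would unwind Definition \eqref{c1}: for $p,q\in\mathbb{N}$ one has
$$
G_pG_q=\frac{\varphi(p)^p\,\varphi(q)^q}{(e\rho)^{p/\rho}\,(e\rho)^{q/\rho}}=\frac{\varphi(p)^p\,\varphi(q)^q}{(e\rho)^{(p+q)/\rho}},
$$
since $e\rho>0$ and the exponents add. Thus the factors $(e\rho)^{\bullet/\rho}$ recombine exactly into the denominator of $G_{p+q}$, and the inequality $G_pG_q\le G_{p+q}$ is equivalent to
$$
\varphi(p)^p\,\varphi(q)^q\le\varphi(p+q)^{\,p+q}.
$$

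Next I would exploit the properties of $\varphi$ coming from the normalization of the proximate order (Definition \ref{norm}): since $t=r^{\hat{\rho}(r)}$ is strictly increasing and maps $(0,+\infty)$ onto $(0,+\infty)$, its inverse $\varphi\colon(0,+\infty)\to(0,+\infty)$ is strictly increasing and takes strictly positive values. Because $p,q\in\mathbb{N}$ satisfy $0<p\le p+q$ and $0<q\le p+q$, monotonicity of $\varphi$ gives $0<\varphi(p)\le\varphi(p+q)$ and $0<\varphi(q)\le\varphi(p+q)$. Raising the first inequality to the power $p>0$ and the second to the power $q>0$ — both $x\mapsto x^p$ and $x\mapsto x^q$ being increasing on $(0,+\infty)$ — yields $\varphi(p)^p\le\varphi(p+q)^p$ and $\varphi(q)^q\le\varphi(p+q)^q$. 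Multiplying these and using $\varphi(p+q)^p\,\varphi(p+q)^q=\varphi(p+q)^{\,p+q}$ gives the displayed estimate, hence $G_pG_q\le G_{p+q}$.

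There is essentially no analytic obstacle here; the only point that deserves care is that the argument uses $\varphi$ to be \emph{globally} increasing and positive on all of $(0,+\infty)$, which is precisely why one works with the normalized proximate order $\hat{\rho}$ (so that $r\mapsto r^{\hat{\rho}(r)}$ is a genuine increasing bijection of $(0,+\infty)$) rather than with $\rho(r)$ itself, which is only eventually monotone. I would also keep in mind that here $\mathbb{N}$ denotes the positive integers, so $p,q\ge 1$ and all of $\varphi(p),\varphi(q),\varphi(p+q)$ are well defined and strictly positive, which is all that the monotonicity argument requires.
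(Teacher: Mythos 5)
Your proof is correct. The paper itself gives no proof of this lemma, referring instead to Section 2 of \cite{AIO20}; the argument you present — cancelling the $(e\rho)^{\cdot/\rho}$ factors exactly so that the claim reduces to $\varphi(p)^p\varphi(q)^q\le\varphi(p+q)^{p+q}$, and then closing it via the strict monotonicity and positivity of $\varphi$ on $(0,+\infty)$ (precisely what the normalization $\hat{\rho}$ of Definition \ref{norm} guarantees) — is the natural one and is sound.
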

\begin{lemma}
For every $\delta>0$ with $\delta<\frac 1\rho$, there exists $T_0>0$ such that if $t\geq T_0$, we have
$$\left(\frac 1\rho-\delta\right)\frac d{dt} \ln(t)<\frac d{dt}\ln \varphi(t)<\left(\frac 1\rho+\delta\right)\frac d{dt} \ln(t).$$
\end{lemma}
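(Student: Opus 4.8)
The plan is to compute the logarithmic derivative of $\varphi$ explicitly via implicit differentiation and then pass to the limit. First I would write $r=\varphi(t)$, so that by definition $t=r^{\hat{\rho}(r)}$, i.e. $\ln t=\hat{\rho}(r)\ln r$. Since $\hat{\rho}$ is differentiable, differentiating this identity in $r$ gives $\tfrac1t\tfrac{dt}{dr}=\hat{\rho}'(r)\ln r+\tfrac{\hat{\rho}(r)}{r}$, hence
$$\frac{dt}{dr}=t\Big(\hat{\rho}'(r)\ln r+\frac{\hat{\rho}(r)}{r}\Big)=\frac{t}{r}\big(r\hat{\rho}'(r)\ln r+\hat{\rho}(r)\big).$$
Because $\hat{\rho}$ coincides with the proximate order $\rho(r)$ for $r$ large, properties (1)--(2) of a proximate order give $\hat{\rho}(r)\to\rho$ and $r\hat{\rho}'(r)\ln r\to0$ as $r\to+\infty$; in particular $r\hat{\rho}'(r)\ln r+\hat{\rho}(r)\to\rho>0$, so $\tfrac{dt}{dr}>0$ for all sufficiently large $r$. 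By the inverse function theorem $\varphi$ is differentiable for $t$ large, with
$$\varphi'(t)=\left(\frac{dt}{dr}\right)^{-1}=\frac{r}{t\big(r\hat{\rho}'(r)\ln r+\hat{\rho}(r)\big)}.$$

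Next I would form the ratio of the two logarithmic derivatives in the statement. Since $\tfrac{d}{dt}\ln t=\tfrac1t$, we get
$$\frac{\frac{d}{dt}\ln\varphi(t)}{\frac{d}{dt}\ln t}=t\,\frac{\varphi'(t)}{\varphi(t)}=\frac{1}{r\hat{\rho}'(r)\ln r+\hat{\rho}(r)}.$$
As $t\to+\infty$ one has $r=\varphi(t)\to+\infty$ (the map $r\mapsto r^{\hat{\rho}(r)}$ being an increasing bijection of $(0,+\infty)$ onto itself), so the right-hand side tends to $\tfrac1\rho$. Hence, given $\delta>0$, there is $T_0>0$ such that for $t\geq T_0$
$$\frac1\rho-\delta<\frac{1}{r\hat{\rho}'(r)\ln r+\hat{\rho}(r)}<\frac1\rho+\delta,$$
and multiplying through by $\tfrac{d}{dt}\ln t=\tfrac1t>0$ yields exactly the claimed double inequality; the hypothesis $\delta<\tfrac1\rho$ merely guarantees that the lower bound is positive.

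The argument is essentially routine. The only points that require a bit of care are the justification that $\varphi$ is differentiable for large $t$ — which follows from $\tfrac{dt}{dr}>0$ established above — and the remark that the normalization $\hat{\rho}$ inherits the limit relations (1)--(2) from $\rho(r)$, which is immediate since $\hat{\rho}(r)=\rho(r)$ for $r\geq r_1$. I do not expect a genuine obstacle here.
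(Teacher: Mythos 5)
Your proof is correct and complete: computing $\tfrac{d}{dt}\ln\varphi(t)$ by implicit differentiation of $\ln t=\hat\rho(r)\ln r$ and then invoking properties $(1)$–$(2)$ of a proximate order to pass to the limit $\tfrac1\rho$ is exactly the standard argument. Note that the paper itself does not prove this lemma; it is listed among ``basic properties of proximate orders'' and referred to Section~2 of \cite{AIO20} and its references, so your self-contained derivation is a welcome supplement rather than a divergence. The only point worth flagging, which you already handle, is that one must restrict to $r\geq r_1$ so that $\hat\rho$ agrees with the differentiable $\rho(r)$ and inherits its limit relations, and that $\varphi(t)\to+\infty$ as $t\to+\infty$ so the change of variable $r=\varphi(t)$ does land in this regime.
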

\begin{lemma}\label{l24}
For $u,\, t,\, \sigma>0$ we define
$$ y_\sigma(u,t):=\ln\frac{\varphi(t)}{\varphi(u)}-\sigma\frac tu. $$
Then for any $\sigma'$ with $0<\sigma'<\sigma$, there exists $T_1$ such that
$$y_\sigma(u,t)+\frac 1\rho\ln(e\rho)\leq -\frac 1\rho\ln(\sigma'),\quad\textrm{for any $u,\, t\geq T_1$.}$$
\end{lemma}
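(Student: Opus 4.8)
\emph{Proof plan.} The plan is to reduce the two-variable estimate to an elementary one-variable optimization. Write $\psi(t):=\ln\varphi(t)$ (differentiable for large $t$, as is already implicit in the preceding lemma) and put $\lambda:=t/u$, so that $y_\sigma(u,t)=\psi(t)-\psi(u)-\sigma\lambda$. I would fix an auxiliary $\delta$ with $0<\delta<1/\rho$ and let $T_0=T_0(\delta)$ be the threshold from the preceding lemma, so that $(\frac{1}{\rho}-\delta)\frac{1}{\tau}<\psi'(\tau)<(\frac{1}{\rho}+\delta)\frac{1}{\tau}$ for all $\tau\ge T_0$; set $T_1:=T_0$. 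Integrating this estimate over $[u,t]$ when $t\ge u$ and over $[t,u]$ when $t<u$ gives, in both cases and for all $u,t\ge T_1$,
$$\psi(t)-\psi(u)\le\frac{1}{\rho}\ln\lambda+\delta\,|\ln\lambda|,\qquad\text{hence}\qquad y_\sigma(u,t)\le h_\delta(\lambda):=\frac{1}{\rho}\ln\lambda+\delta\,|\ln\lambda|-\sigma\lambda,$$
since the coefficient $\frac{1}{\rho}+\delta$ (for $\lambda\ge1$) or $\frac{1}{\rho}-\delta$ (for $\lambda<1$) multiplied by $\ln\lambda$ equals $\frac{1}{\rho}\ln\lambda+\delta|\ln\lambda|$ in either range.

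Next I would maximize $h_\delta$ over $\lambda>0$. Since $\frac{1}{\rho}-\delta>0$, $h_\delta$ tends to $-\infty$ as $\lambda\to0^+$ and as $\lambda\to+\infty$, so $M(\delta):=\sup_{\lambda>0}h_\delta(\lambda)$ is finite and attained. On $[1,\infty)$ one has $h_\delta(\lambda)=(\frac{1}{\rho}+\delta)\ln\lambda-\sigma\lambda$ and on $(0,1]$ one has $h_\delta(\lambda)=(\frac{1}{\rho}-\delta)\ln\lambda-\sigma\lambda$; the critical point of $a\ln\lambda-\sigma\lambda$ is $\lambda=a/\sigma$, with value $a\ln\frac{a}{e\sigma}$, and at the junction $h_\delta(1)=-\sigma$. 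Tracking whether $a/\sigma$ lands in the relevant half-line, one gets
$$M(\delta)\le\max\Bigl\{-\sigma,\ \bigl(\tfrac{1}{\rho}+\delta\bigr)\ln\tfrac{1/\rho+\delta}{e\sigma},\ \bigl(\tfrac{1}{\rho}-\delta\bigr)\ln\tfrac{1/\rho-\delta}{e\sigma}\Bigr\}.$$
Letting $\delta\to0^+$, the last two terms tend to $\frac{1}{\rho}\ln\frac{1}{e\rho\sigma}=-\frac{1}{\rho}\ln(e\rho\sigma)$, while $-\sigma\le-\frac{1}{\rho}\ln(e\rho\sigma)$ for every $\sigma>0$: with $x:=\rho\sigma$ this is exactly $x-\ln x\ge1$, the minimum of $x-\ln x$ being attained at $x=1$. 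Hence $\limsup_{\delta\to0^+}M(\delta)\le-\frac{1}{\rho}\ln(e\rho\sigma)$.

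Finally, given $\sigma'$ with $0<\sigma'<\sigma$, one has $-\frac{1}{\rho}\ln(e\rho\sigma)<-\frac{1}{\rho}\ln(e\rho\sigma')$, so I can pick $\delta$ small enough that $M(\delta)\le-\frac{1}{\rho}\ln(e\rho\sigma')=-\frac{1}{\rho}\ln(e\rho)-\frac{1}{\rho}\ln\sigma'$. With $T_1:=T_0(\delta)$ for this $\delta$, the previous two steps give, for all $u,t\ge T_1$,
$$y_\sigma(u,t)+\frac{1}{\rho}\ln(e\rho)\le h_\delta(t/u)+\frac{1}{\rho}\ln(e\rho)\le M(\delta)+\frac{1}{\rho}\ln(e\rho)\le-\frac{1}{\rho}\ln\sigma',$$
which is the assertion. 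I expect the only genuinely delicate point to be the bookkeeping in the maximization of $h_\delta$: one must split according to the position of $\sigma$ relative to $\frac{1}{\rho}\pm\delta$, and use $x-\ln x\ge1$ to absorb the boundary contribution $-\sigma$. The first step is a routine integration of the estimate from the preceding lemma, and the choice of $\delta$ and $T_1$ at the end is immediate once $M(\delta)$ has been controlled.
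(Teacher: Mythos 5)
Your proof is correct. The paper does not actually supply its own proof of Lemma~\ref{l24} (it is stated as a recalled property of proximate orders, with the proof delegated to Section~2 of \cite{AIO20} and the references therein), but your argument is the natural one built on the preceding derivative estimate for $\ln\varphi$: integrate over $[u,t]$ or $[t,u]$ to reduce $y_\sigma(u,t)$ to the one-variable function $h_\delta(\lambda)$ of $\lambda=t/u$, carry out the elementary piecewise maximization, let $\delta\to 0^+$, and absorb the boundary value $h_\delta(1)=-\sigma$ via $\rho\sigma-\ln(\rho\sigma)\geq 1$. The bookkeeping is handled correctly and the final choice of $\delta$ and $T_1=T_0(\delta)$ yields exactly the stated inequality.
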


Keeping in mind the above results we can now introduce the notion of  monogenic functions of proximate order and we can study some properties.

\section{Some properties of monogenic functions of proximate order}\label{Prel}

In the following we will use some results on
monogenic entire functions contained in \cite{CDK07,CDK07bis,CK02}.
We prove some new properties of entire slice monogenic functions that appear here for the first time to the best of the knowledged of the authors.
Some of the difficulties in proving our results relay
on in the series expansion of these functions in terms of the Fueter polynomials.
\begin{lemma}\label{lsupernova1}
If $\sigma_2>\sigma_1>0$, then the inclusion map $A_{\rho,\sigma_1}\xhookrightarrow{} A_{\rho,\sigma_2}$ is compact.
\end{lemma}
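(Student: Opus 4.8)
The plan is a normal families (Montel/Vitali) argument adapted to the monogenic setting, combined with the elementary fact noted just after the definition of a proximate order, namely that $r\mapsto r^{\rho(r)}$ is strictly increasing for $r$ large and tends to $+\infty$. First I would record that the inclusion is bounded: since $\|\bfx\|^{\rho(\|\bfx\|)}\ge 0$ and $\sigma_2>\sigma_1$, one has $\exp(-\sigma_2\|\bfx\|^{\rho(\|\bfx\|)})\le\exp(-\sigma_1\|\bfx\|^{\rho(\|\bfx\|)})$, hence $\|f\|_{\rho,\sigma_2}\le\|f\|_{\rho,\sigma_1}$; so the content of the statement is compactness. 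I would thus start from a sequence $(f_k)_k\subset A_{\rho,\sigma_1}$ with $\|f_k\|_{\rho,\sigma_1}\le C$ and aim to extract a subsequence converging in $A_{\rho,\sigma_2}$.

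Step 1 (local compactness). From $\|f_k(\bfx)\|\le C\exp(\sigma_1\|\bfx\|^{\rho(\|\bfx\|)})$ and continuity of the right-hand side, the family $(f_k)_k$ is uniformly bounded on every ball. Using the Cauchy integral formula for monogenic functions together with the derivative estimate \eqref{estimate1} of Section~\ref{SEC2}, I would bound the first-order partial derivatives of the $f_k$ uniformly on each slightly smaller ball, thereby obtaining equicontinuity on compact sets; Arzel\`a--Ascoli plus a diagonal argument then yield a subsequence $(f_{k_j})_j$ converging uniformly on compact subsets of $\mathbb R^{n+1}$ to some function $f$. Passing to the limit in the Cauchy formula shows $f\in\mathcal M_L(\mathbb R^{n+1})$, and letting $j\to\infty$ in the pointwise bound gives $f\in A_{\rho,\sigma_1}$ with $\|f\|_{\rho,\sigma_1}\le C$.

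Step 2 (convergence in the weaker norm). To get $\|f_{k_j}-f\|_{\rho,\sigma_2}\to 0$, I would fix $\varepsilon>0$ and split the supremum defining the norm at a radius $R$. For $\|\bfx\|>R$ I use $\|f_{k_j}(\bfx)-f(\bfx)\|\le 2C\exp(\sigma_1\|\bfx\|^{\rho(\|\bfx\|)})$, so the corresponding part of the norm is at most $2C\exp\bigl(-(\sigma_2-\sigma_1)R^{\rho(R)}\bigr)$, using that $r\mapsto r^{\rho(r)}$ is increasing for $r$ large; since $\sigma_2-\sigma_1>0$ and $R^{\rho(R)}\to+\infty$, this is $<\varepsilon$ once $R$ is large enough. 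With such an $R$ now fixed, on $\|\bfx\|\le R$ the weight $\exp(-\sigma_2\|\bfx\|^{\rho(\|\bfx\|)})$ is $\le 1$, so that part of the norm is bounded by $\sup_{\|\bfx\|\le R}\|f_{k_j}(\bfx)-f(\bfx)\|$, which tends to $0$ as $j\to\infty$ by the locally uniform convergence. Hence $\limsup_j\|f_{k_j}-f\|_{\rho,\sigma_2}\le\varepsilon$; since $\varepsilon$ was arbitrary, $f_{k_j}\to f$ in $A_{\rho,\sigma_2}$, and the inclusion is compact.

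I expect the only genuinely non-routine point to be Step 1: turning a uniform bound on compacts into a locally uniformly convergent subsequence whose limit is again left monogenic. This is precisely where the Cauchy formula and estimate \eqref{estimate1} do the work — they provide the derivative control needed for equicontinuity and, in the limit, the monogenicity of $f$ (i.e.\ the closedness of $\mathcal M_L(\mathbb R^{n+1})$ under local uniform convergence). Everything else is bookkeeping with the elementary growth properties of $r^{\rho(r)}$.
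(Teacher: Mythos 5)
Your proof is correct and follows essentially the same route as the paper's: both extract a locally uniformly convergent subsequence via Arzel\`a--Ascoli (using the Cauchy formula and the derivative estimate \eqref{estimate1} to get equicontinuity), and then split the $\|\cdot\|_{\rho,\sigma_2}$ norm at a radius $R$, controlling the tail with the gap $\sigma_2-\sigma_1$ and the bulk with locally uniform convergence. The only cosmetic difference is that the paper verifies the Cauchy property in $A_{\rho,\sigma_2}$ directly, whereas you identify the limit first and then prove convergence to it --- logically equivalent since $A_{\rho,\sigma_2}$ is a Banach space.
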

\begin{proof}
We will show that $B:=\{f\in A_{\rho,\sigma_1}:\, \|f\|_{\rho,\sigma_1}\leq 1\}$ is relatively compact in $A_{\rho,\sigma_2}$, i.e., any sequence $\{f_j\}_{j\in\mathbb N}\subset B$ has an accumulation point with respect to the norm of $A_{\rho,\sigma_2}$.

First we will prove that any sequence $\{f_j\}_{j\in\mathbb N}\subset B$ admits a convergent subsequence in the uniform convergence topology to an entire monogenic function. By the Arzel\'a-Ascoli Theorem it is sufficient to prove that $\{f_j\}_{j\in\mathbb N}$ is equicontinuous and uniformly bounded in any compact convex set $K\subseteq\mathbb R^{n+1}$. We fix a compact convex subset $K$ of $\mathbb R^{n+1}$. Since $\{f_j\}_{j\in\mathbb N}\subset B$, the sequence is uniformly bounded int $K$. Moreover, we have
$$ \|f_j(\bfx)-f_j(\bfy)\|\leq C_j\|\bfx-\bfy\|,$$
where $\nabla$ is the usual gradient, $C_j=\sup_{\bfx\in K}\|\nabla f_j(\bfx)\|$ and $\bfx,\, \bfy\in K$. We choose $r$ large enough in a such way that $K\subset B(0,r)$. Thus, there exists a positive constant $C_K$ which only depends on $K$ such that for any $\bfx\in K$ and for any $j\in\mathbb N$, we have
\[
\begin{split}
\|\nabla f_j(\bfx)\|&=\|\frac 1{A_{n+1}}\int_{\partial B(0,r)}\nabla q_0(\bfx-\xi)d\tau(\xi) f_j(\xi)\|\\
&\leq C_1 \int_{\partial B(0,r)} \frac {1}{\|\bfx-\xi\|^{1+n}} |d\tau(\xi)| M(r,f_j)\leq C_2 \frac{M(r,f_j)}{\operatorname{dist}(\bfx,\partial B(0,r))^{1+n}}\leq C_K.
\end{split}
\]
where $C_1$ and $C_2$ are suitable positive constants while in the first inequality we have used \eqref{estimate1}.
In particular, for any $\bfx,\, \bfy\in K$ and for any $j\in\mathbb N$ we have
$$
\|f_j(\bfx)-f_j(\bfy)\|\leq C_K\|\bfx-\bfy\|,
$$
 i.e., $\{f_j\}_{j\in\mathbb N}$ is equicontinuous. After taking a subsequence if necessary, we can suppose that the sequence $\{f_j\}_{j\in\mathbb N}$ converges to $f\in\mathcal M_L(\mathbb R^{n+1})$ in the topology of the uniform convergence. Now we will prove that the sequence $\{f_j\}_{j\in\mathbb N}$ is a Cauchy sequence in $A_{\rho,\sigma_2}$. We fix $\delta>0$ and we observe that for any $R>0$ we have
\[
\begin{split}
& \sup_{\bfx\in \mathbb R^{n+1}}\|f_j(\bfx)-f_\ell(\bfx)\|\exp(-\sigma_2\|\bfx\|^{\rho(\|\bfx\|)})\\
& =\max\left\{\sup_{\|\bfx\|\leq R}\|f_j(\bfx)-f_\ell(\bfx)\|\exp(-\sigma_2\|\bfx\|^{\rho(\|\bfx\|)}), \sup_{\|\bfx\|\geq R}\|f_j(\bfx)-f_\ell(\bfx)\|\exp(-\sigma_2\|\bfx\|^{\rho(\|\bfx\|)})\right\}.
\end{split}
\]
With respect to the second supremum, we can choose $R>0$ large enough such that:
$$
\exp ((\sigma_1-\sigma_2)\|\bfx\|^{\rho(\|\bfx\|)})\leq \frac \delta2, \ \  \  {\rm for\ any}\  \|\bfx\|\geq R.
$$
 Thus, since $f_j,\, f_\ell\in B$, we have that
\[
\begin{split}
&\sup_{\|\bfx\|\geq R}\|f_j(\bfx)-f_\ell(\bfx)\|\exp(-\sigma_2\|\bfx\|^{\rho(\|\bfx\|)})\\
&=\sup_{\|\bfx\|\geq R}\|f_j(\bfx)-f_\ell(\bfx)\|\exp(-\sigma_1\|\bfx\|^{\rho(\|\bfx\|)}) \exp((\sigma_1-\sigma_2)\|\bfx\|^{\rho(\|\bfx\|)})\leq 2\cdot\frac \delta 2=\delta.
\end{split}
\]
Moreover, with respect to the first supremum, by the uniform convergence of the sequence $\{f_j\}_{j\in\mathbb N}$ on the compact subset of $\mathbb R^{n+1}$, there exists a positive integer $N$ such that for any $j,\, \ell\geq N$ we have
$$ \sup_{\|\bfx\|\leq R}\|f_j(\bfx)-f_\ell(\bfx)\|\exp(-\sigma_2\|\bfx\|^{\rho(\|\bfx\|)})\leq \sup_{\|\bfx\|\leq R}\|f_j(\bfx)-f_\ell(\bfx)\|\leq \delta. $$
Thus we have proved that the sequence $\{f_j\}_{j\in\mathbb N}$ is a Cauchy sequence.
\end{proof}
\begin{definition}[The spaces $A_\rho$ and $A_{\rho,\sigma+0}$] We define the space
$$A_\rho:=\lim_{\underset{\sigma>0}{\rightarrow}} A_{\rho,\sigma} $$
i.e. $A_{\rho}=\cup_{\sigma>0}A_{\rho,\sigma}$ and we say that a sequence $\{f_j\}_{j\in\mathbb N}\subseteq A_{\rho}$ converges to $f\in A_\rho$ if there exists $\sigma>0$ such that $\{f_j\}_{j\in\mathbb N}\subseteq A_{\rho,\sigma}$, $f\in A_{\rho,\sigma}$ and $\lim_{j\to+\infty} \|f_j-f\|_{\rho,\sigma}=0$.

We also define the space
$$A_{\rho,\sigma+0}:=\lim_{\underset{\epsilon>0}{\leftarrow}} A_{\rho, \sigma+\epsilon}$$
i.e., $A_{\rho,\sigma+0}:=\cap_{\epsilon>0} A_{\rho,\sigma+\epsilon}$ and we say that a sequence $\{f_j\}_{j\in\mathbb N}\subseteq A_{\rho,\sigma+0}$ converges to $f\in A_{\rho,\sigma+0}$ if for any $\epsilon>0$ we have $\{f_j\}_{j\in\mathbb N}\subseteq A_{\rho,\sigma+\epsilon}$, $f\in A_{\rho,\sigma+\epsilon}$ and $\lim_{j\to+\infty} \|f_j-f\|_{\rho,\sigma+\epsilon}=0$.
\end{definition}
\begin{remark}
The spaces $A_\rho$ and $A_{\hat{\rho}}$ coincide with each other and they share the same locally convex topologies as well, and the same holds for $A_{\rho,\sigma+0}$ and $A_{\hat{\rho},\sigma+0}$.
\end{remark}
For any fixed $f(\bfx)=\sum_{|\bfm|=0}^{\infty} V_\bfm(\bfx) a_\bfm\in\mathcal M_L(\mathbb R^{n+1})$ we define
$$ K_q:=\sup_{\|\bfx\|\leq 1}\|\sum_{|\bfm|=q} V_\bfm(\bfx)a_\bfm\| $$
and
$$ P_q(\vx):=\sum_{|\vm|=q} V_\vm(\vx)a_\vm. $$
\begin{theorem}\label{tsupernovissima1}
Let $f(\bfx)=\sum_{|\bfm|=0}^{\infty} V_\bfm(\bfx) a_\bfm\in\mathcal M_L(\mathbb R^{n+1})$ be a left monogenic entire function of finite order $\rho>0$ and of proximate order $\rho(r)$. Then its type $\sigma$ with respect to $\rho(r)$ is given by
\begin{equation}\label{kq}
\frac 1\rho\ln(\sigma)=\limsup_{q\to\infty}\left(\frac 1q\ln K_q+\ln\varphi(q)\right)-\frac 1\rho-\frac{\ln(\rho)}{\rho}.
\end{equation}
\end{theorem}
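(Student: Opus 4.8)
The plan is the following. First I would normalise the proximate order: replacing $\rho(r)$ by the normalisation $\hat\rho(r)$ of Definition~\ref{norm} changes neither the type $\sigma$ of $f$ (the two functions agree for large $r$), nor $\varphi$, nor the constants $G_q$ of \eqref{c1}, so I may assume $\rho(r)=\hat\rho(r)$, i.e. that $r\mapsto r^{\hat\rho(r)}$ is an increasing bijection of $(0,+\infty)$ onto itself with inverse $\varphi$; recall that with respect to $\hat\rho$ the type is $\sigma=\limsup_{r\to+\infty}\ln^{+}M(r,f)/r^{\hat\rho(r)}$. Since $\tfrac1q\ln(K_qG_q)=\tfrac1q\ln K_q+\ln\varphi(q)-\tfrac1\rho\ln(e\rho)$ by \eqref{c1}, the asserted identity \eqref{kq} is equivalent to
\[
\sigma^{1/\rho}=\Lambda,\qquad \Lambda:=\limsup_{q\to\infty}(K_qG_q)^{1/q},
\]
and one may assume $0<\sigma<+\infty$, the cases $\sigma\in\{0,+\infty\}$ being covered by the same two inequalities. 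The whole argument rests on two estimates linking $M(r,f)$ and the sequence $(K_q)$. Since $V_{\bfm}(r\bfx)=r^{|\bfm|}V_{\bfm}(\bfx)$ one has $\sup_{\|\bfx\|=r}\|P_q(\bfx)\|=K_qr^q$, and summing the Taylor series $f=\sum_q P_q$ gives the elementary upper bound
\[
M(r,f)\le\sum_{q=0}^{\infty}K_q\,r^q,\qquad r>0;\qquad(\mathrm A)
\]
in the opposite direction one needs a Cauchy-type bound
\[
K_q\,r^q\le\kappa_n(q)\,M(r,f),\qquad r>0,\quad q\in\mathbb N_0,\qquad(\mathrm B)
\]
with $\kappa_n(q)$ growing at most polynomially in $q$, so that $\kappa_n(q)^{1/q}\to1$.

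Granting $(\mathrm A)$ and $(\mathrm B)$, the equality $\sigma^{1/\rho}=\Lambda$ would follow by the same Legendre-transform/Laplace-method bookkeeping used for the holomorphic case in Section~3 of \cite{AIO20}, with $P_q$ and $K_q$ in the roles of $a_qz^q$ and $|a_q|$: indeed the polynomial factors $\kappa_n(q)^{1/q}$ and $(\#\{\bfm:|\bfm|=q\})^{1/q}$ tend to $1$ and therefore do not affect any $q$-th root limit. For $\sigma^{1/\rho}\le\Lambda$, fix $\Lambda'>\Lambda$, so $K_q\le(\Lambda')^q/G_q$ for $q$ large and hence $K_qr^q\le(\Lambda'(e\rho)^{1/\rho}r/\varphi(q))^q$ by \eqref{c1}; inserting this in $(\mathrm A)$ and estimating the series by its maximal term — the number of terms of comparable size being polynomial in $r^{\hat\rho(r)}$, while the location and value of the maximum are controlled by the properties of $\varphi$ and $\{G_q\}$ recalled in Section~2 (in particular Lemma~\ref{lsupersupernova2} and Lemma~\ref{l24}) — one obtains $M(r,f)\le\exp(\lambda\,r^{\hat\rho(r)})$ for every $\lambda>(\Lambda')^{\rho}$ and all large $r$, so $\sigma\le(\Lambda')^{\rho}$ and, letting $\Lambda'\downarrow\Lambda$, $\sigma^{1/\rho}\le\Lambda$. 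For the reverse inequality $\sigma^{1/\rho}\ge\Lambda$, fix $\sigma'>\sigma$, so $M(r,f)\le\exp(\sigma'r^{\hat\rho(r)})$ for $r$ large, and evaluate $(\mathrm B)$ at the radius $r=\varphi(q/(\sigma'\rho))$, which exceeds the threshold for all large $q$: this gives $K_q\le\kappa_n(q)\,e^{q/\rho}\varphi(q/(\sigma'\rho))^{-q}$, whence, using \eqref{c1} together with $\varphi(q)/\varphi(q/(\sigma'\rho))\to(\sigma'\rho)^{1/\rho}$ (which follows from the logarithmic-derivative estimate for $\varphi$ recalled in Section~2), one gets $(K_qG_q)^{1/q}\le\kappa_n(q)^{1/q}\rho^{-1/\rho}\,\varphi(q)/\varphi(q/(\sigma'\rho))$, whose limit is $(\sigma')^{1/\rho}$; thus $\Lambda\le(\sigma')^{1/\rho}$, and $\Lambda\le\sigma^{1/\rho}$ on letting $\sigma'\downarrow\sigma$. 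Combining the two inequalities yields \eqref{kq}.

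The main obstacle, and the only point where the monogenic case genuinely departs from the holomorphic one, is the Cauchy-type bound $(\mathrm B)$: in the scalar case $P_q(z)=a_qz^q$ and $(\mathrm B)$ is the plain Cauchy inequality with $\kappa_n(q)\equiv1$, whereas here $P_q$ is the homogeneous degree-$q$ (inner spherical) monogenic component of $f$, i.e. a sum of Fueter monomials $V_{\bfm}a_{\bfm}$ over all multi-indices of length $q$. Writing $f$ by the Cauchy formula on $\|\xi\|=r$, expanding the kernel $q_0(\bfx-\xi)$ into its $\bfx$-homogeneous parts and bounding the degree-$q$ part term by term via \eqref{estimate1} and $\|V_{\bfm}(\bfx)\|\le\|\bfx\|^{|\bfm|}$ only yields $\kappa_n(q)\lesssim n(n+1)\cdots(n+q-1)\,n^q/q!$, whose $q$-th root tends to $n$ rather than to $1$ — too weak, since this discards the cancellation among the summands $V_{\bfm}(\bfx)\,q_{\bfm}(\xi)/\bfm!$, $|\bfm|=q$. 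I would instead argue componentwise: since $f$ is monogenic and $\overline{\mathcal D}\mathcal D=\Delta$ is the scalar Laplacian of $\mathbb R^{n+1}$, every Clifford component $f_A$ is an entire harmonic function and $P_{q,A}$ is precisely its homogeneous degree-$q$ harmonic part; the classical estimate for homogeneous harmonic parts,
\[
\sup_{\|\bfx\|=r}|P_{q,A}(\bfx)|\le(\dim\mathcal H_q)^{1/2}\,\sup_{\|\xi\|=r}|f_A(\xi)|,
\]
which comes from the orthogonality of spherical harmonics of distinct degrees together with the Nikol'skii inequality for degree-$q$ spherical harmonics (a space of dimension $\sim q^{n-1}$), combined with summation over the finitely many indices $A$, gives $(\mathrm B)$ with $\kappa_n(q)=C_n\,q^{(n-1)/2}$. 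Alternatively, one may estimate the degree-$q$ part of $q_0(\bfx-\xi)$ directly from the Gegenbauer-type closed form of the homogeneous expansion of the Cauchy kernel found in \cite{CDK07,CK02}, whose supremum over the product of unit spheres is $O(q^{N})$. With $(\mathrm B)$ established, the proof is complete.
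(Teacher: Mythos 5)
Your overall strategy — normalise the proximate order, establish the elementary upper bound $(\mathrm{A})$, a Cauchy-type lower bound $(\mathrm{B})$ with a polynomial factor $\kappa_n(q)$, and then run the Lelong--Gruman bookkeeping — coincides with the paper's, and you have correctly identified the bound $(\mathrm{B})$ as the only step where the monogenic case is genuinely harder than the scalar one: the naive summation of the Cauchy inequalities $\|a_{\vm}\|\le c(n,\vm)M(r,f)/r^{q}$ over $|\vm|=q$ really does produce a factor whose $q$-th root tends to $n$ rather than to $1$. Where you depart from the paper is in how $(\mathrm{B})$ is obtained, and the two routes are genuinely different.

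The paper's proof of $(\mathrm{B})$ is a Clifford-specific rotation argument. Pick $\bfw_q$ on the unit sphere where $\|P_q\|$ attains the maximum $K_q$; by \cite[Theorem~3.20]{GHS08} write $\bfw_q=\bfb(x\bfe_1)\bar\bfb$ for a unit spinor $\bfb$ and some $x\in\mathbb R$, $|x|\le 1$; set $g(\bfx):=\bfb f(\bfb\bfx\bar\bfb)$, which is again left monogenic by \cite{R85} and has $M(r,g)=M(r,f)$. Since every Fueter monomial $V_{\vm}$ with $\vm\ne(q,0,\dots,0)$ vanishes along the $e_1$-axis, the degree-$q$ homogeneous part $\tilde P_q$ of $g$ collapses to a single term at $x\bfe_1$, giving $K_q=\|\tilde P_q(x\bfe_1)\|\le\|a'_{(q,0,\dots,0)}\|\le c\bigl(n,(q,0,\dots,0)\bigr)M(r,f)/r^{q}$. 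Thus $\kappa_n(q)=\binom{n+q-1}{n-1}\sim q^{n-1}/(n-1)!$, whose $q$-th root tends to $1$. Your alternative — observe that $\bar{\mathcal D}\mathcal D=\Delta$ makes every Clifford component $f_A$ harmonic on $\mathbb R^{n+1}$, so $P_{q,A}$ is exactly the degree-$q$ spherical harmonic component of $f_A$ on each sphere, and then invoke the addition-formula estimate $\sup_{S_r}|P_{q,A}|\le\sqrt{\dim\mathcal H_q}\,\sup_{S_r}|f_A|$ — is correct as well, giving the slightly sharper $\kappa_n(q)\sim C_n q^{(n-1)/2}$ (either polynomial rate is more than enough). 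The paper's rotation trick is self-contained inside Clifford analysis and finds a \emph{single} Fueter coefficient that dominates after a suitable rotation; your route imports the Funk--Hecke/addition-formula machinery from classical spherical-harmonics theory but makes the source of the cancellation (orthogonality of distinct homogeneous harmonic parts) conceptually explicit, and arguably is the more transparent explanation of why the naive termwise estimate is wasteful.
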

\begin{proof}
First we prove that
$$
\frac 1\rho\ln(\sigma)\geq\limsup_{q\to\infty}\left(\frac 1q\ln K_q+\ln\varphi(q)\right)-\frac 1\rho-\frac{\ln(\rho)}{\rho}.
$$
Let
$$
P_q(\bfx)=\sum_{|\bfm|=q} V_{\bfm}(\bfx)a_{\bfm}
$$
and suppose that $\bfw_q\in\mathbb R^{n+1}$ be such that $\|P_q(\bfw_q)\|=K_q$ and $\|\bfw_q\|=1$. There exists $\bfb\in \mathbb R_{n}$ (see \cite[Theorem $3.20$]{GHS08}) such that
\begin{itemize}
\item $\bfb=\bfz_1\cdots \bfz_r$ where $\bfz_i\in\mathbb R^{n+1}$ and $\|\bfz_i\|=1$ for any $i=1,\dots r$,\\
\item $\bfw_q:= \bfb (x\bfe_1) \bar \bfb$ for some $x\in\mathbb R$ with $|x|\leq 1$ where $\bar \bfb:= \bar \bfz_r\cdots \bar \bfz_1 $.
\end{itemize}
The function $g(\bfx):=\bfb f(\bfb \bfx \bar\bfb)$ is a left monogenic entire function (see \cite{R85}) whose Taylor series is
$$
g(\bfx)=\sum_{|\bfm|=0}^{\infty} V_\bfm(\bfx) a_\bfm '.
$$
 We define
 $$
 \tilde P_q(\bfx):=\sum_{|\bfm|=q} V_{\bfm}(\bfx)a_\bfm'.
 $$
  Thus we have
$$K_q=\|P_q(\bfw_q)\|=\|\tilde P_q(x\bfe_1)\|=\|x^q a_{\bfm_q}'\|\leq \|a_{\bfm_q}'\|\leq \frac{c(n,\bfm_q) M(r,g)}{r^q}= \frac{c(n,\bfm_q) M(r,f)}{r^q},$$
where $\bfm_q=(q,0,\dots,0)$. If $\tilde \sigma>\sigma$ then for $r$ large we have:
$$M(r,f)\leq \exp\left(\tilde \sigma r^{\rho(r)}\right),$$ and
\begin{equation}\label{kq1}
\ln(K_q)\leq \ln(c(n,\bfm_q))+\tilde\sigma r^{\rho(r)}-q\ln(r).
\end{equation}
If $q$ is large enough, then we define $r_q$ to be the real number such that $q=\tilde\sigma \cdot\rho\cdot (r_q)^{\rho(r_q)}$ and we have $\varphi\left(\frac{q}{\tilde\sigma \rho}\right)=r_q$. Thus, for $q$ large enough, dividing by $q$ and summing $\log(\varphi(q))$ to both side of inequality \eqref{kq1}, we have
\begin{equation}\label{kq2}
\ln\left(\varphi(q)K_q^{\frac 1q}\right)<\ln\left(c(n,\bfm_q)^{\frac 1q}\right)+\frac 1\rho+\ln\left(\frac{\varphi(q)}{\varphi\left(\frac q{\tilde \sigma \rho}\right)}\right).
\end{equation}
By \cite[(1) Theorem 1.23]{LG86} we have
$$
\lim_{q\to+\infty}\frac{\varphi(q)}{\varphi\left(\frac q{\tilde\sigma\rho}\right)}=(\tilde\sigma \rho)^{\frac 1\rho}.
$$
 Moreover, since  $\lim_{q\to +\infty} \left(c(n,\bfm_q)\right)^{\frac 1q}=1$, taking the $\limsup$ to both side of \eqref{kq2}, we have
$$\limsup_{q\to+\infty} \ln\left( \varphi(q) K_q^{\frac 1q} \right) \leq \ln\left((e\tilde \sigma \rho)^{\frac 1\rho}\right)$$
for any $\tilde\sigma>\sigma$.
Thus we have proved that
 $$\limsup_{q\to+\infty} \ln\left(\varphi(q) K_q^{\frac 1q} \right) \leq \ln\left( (e\sigma \rho)^{\frac 1\rho}\right).
  $$
  The other side of the inequality follows as in \cite[(3) Theorem 1.23]{LG86} by the properties of $\varphi$.
\end{proof}

\begin{lemma}\label{lsupernovissima1}
A left monogenic entire function $f(\bfx)=\sum_{|\bfm|=0}^{+\infty}V_\bfm(\bfx) a_\bfm\in\mathcal M_L(\mathbb R^{n+1})$ belongs to $A_{\rho,\sigma+0}$ if and only if we have
$$\limsup_{q\to\infty}(K_qG_{\hat{\rho},q})^{\frac \rho q}\leq \sigma.$$
\end{lemma}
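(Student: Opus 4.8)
The plan is to deduce the equivalence from the type formula already proved in Theorem~\ref{tsupernovissima1}, after an elementary rewriting of the quantity $(K_qG_{\hat{\rho},q})^{\rho/q}$, and to treat separately the entire monogenic functions whose growth order is strictly smaller than $\rho$.

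First I would record the identity that comes straight from the definition $G_{\hat{\rho},q}=\varphi(q)^q/(e\rho)^{q/\rho}$ in \eqref{c1}, namely
$$(K_qG_{\hat{\rho},q})^{\rho/q}=\frac{\bigl(\varphi(q)\,K_q^{1/q}\bigr)^{\rho}}{e\rho}.$$
Because $t\mapsto t^{\rho}/(e\rho)$ is continuous and strictly increasing on $[0,+\infty]$, it commutes with $\limsup$, so $\limsup_{q\to\infty}(K_qG_{\hat{\rho},q})^{\rho/q}\le\sigma$ is equivalent to $\limsup_{q\to\infty}\varphi(q)K_q^{1/q}\le(e\rho\sigma)^{1/\rho}$, i.e., after taking logarithms, to
$$\limsup_{q\to\infty}\Bigl(\tfrac{1}{q}\ln K_q+\ln\varphi(q)\Bigr)\le\tfrac{1}{\rho}\ln\sigma+\tfrac{1}{\rho}+\tfrac{\ln\rho}{\rho}.$$
Next I would unwind the definition of $A_{\rho,\sigma+0}=\bigcap_{\epsilon>0}A_{\rho,\sigma+\epsilon}$: that $f$ belongs to it means $M(r,f)\le C_\epsilon\exp\bigl((\sigma+\epsilon)r^{\rho(r)}\bigr)$ for every $\epsilon>0$, which is precisely the statement that $f$ has growth order at most $\rho$ and, when its order equals $\rho$, that its type with respect to $\rho(r)$ does not exceed $\sigma$.

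With these two reductions the critical case $\rho(f)=\rho$ is immediate: Theorem~\ref{tsupernovissima1} gives $\tfrac{1}{\rho}\ln\sigma(f)=\limsup_{q\to\infty}(\tfrac{1}{q}\ln K_q+\ln\varphi(q))-\tfrac{1}{\rho}-\tfrac{\ln\rho}{\rho}$, which by the rewriting above says exactly $\sigma(f)=\limsup_{q\to\infty}(K_qG_{\hat{\rho},q})^{\rho/q}$; together with the membership criterion this proves the equivalence, and it simultaneously covers the degenerate subcases (order $>\rho$, or order $\rho$ with infinite type), where both sides fail. There remains the case $\rho(f)<\rho$, for which I would check that both statements hold. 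On one hand, choosing $\rho(f)<\rho'<\rho$ one has $M(r,f)\le\exp(r^{\rho'})$ for $r$ large and, since $\rho(r)\to\rho>\rho'$, also $r^{\rho'}=o\bigl(r^{\rho(r)}\bigr)$, so $f\in A_{\rho,\epsilon}$ for every $\epsilon>0$ and hence $f\in A_{\rho,\sigma+0}$. On the other hand, the rotation $\bfx\mapsto\bfb\bfx\bar\bfb$ used in the proof of Theorem~\ref{tsupernovissima1}, combined with \eqref{estimate1}, gives the Cauchy-type bound $K_q\le c(n,\bfm_q)\,M(r,f)/r^q$ valid for every $r>0$, where $\bfm_q=(q,0,\dots,0)$ and $c(n,\bfm_q)^{1/q}\to1$; evaluating at $r=t\varphi(q)$ with $t>1$ fixed and letting $q\to\infty$ (here $(t\varphi(q))^{\rho'}=o(q)$, so $M(t\varphi(q),f)^{1/q}\to1$) yields $\limsup_{q\to\infty}\varphi(q)K_q^{1/q}\le1/t$ for every $t>1$, hence this $\limsup$ equals $0$ and $\limsup_{q\to\infty}(K_qG_{\hat{\rho},q})^{\rho/q}=0\le\sigma$.

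I expect the main obstacle to be precisely this subcritical case: Theorem~\ref{tsupernovissima1} is stated only for functions of order exactly $\rho$, so the genuinely new input is the verification that $\varphi(q)K_q^{1/q}\to0$ for entire monogenic functions of order $<\rho$, which rests on the Fueter-polynomial Cauchy estimate \eqref{estimate1} and on the comparison $r^{\rho'}=o(r^{\rho(r)})$. Everything else is bookkeeping with the definition of $A_{\rho,\sigma+0}$ and with the change of variables $t=r^{\hat{\rho}(r)}\leftrightarrow r=\varphi(t)$. A self-contained alternative would avoid Theorem~\ref{tsupernovissima1} altogether: using that each $P_q$ is homogeneous of degree $q$ one has $M(r,f)\le\sum_{q}K_qr^q$, and then both implications reduce to the classical inequalities relating a power series whose coefficients are bounded by $(A/\varphi(q))^q$ to its type with respect to $\rho(r)$, exactly as in the references invoked in the proof of Theorem~\ref{tsupernovissima1}.
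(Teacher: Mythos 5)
Your proposal is correct and follows essentially the same route as the paper: rewrite $(K_qG_{\hat\rho,q})^{\rho/q}$ via the definition of $G_{\hat\rho,q}$, identify $\limsup_{r\to\infty}\sup_{\|\bfx\|\le r}\ln\|f(\bfx)\|/r^{\rho(r)}$ with the type and with the membership criterion for $A_{\rho,\sigma+0}$, and invoke Theorem~\ref{tsupernovissima1}. The one place where you go beyond the paper is the explicit treatment of the subcritical case $\rho(f)<\rho$: the paper applies Theorem~\ref{tsupernovissima1} without comment, even though that theorem is stated only for functions of order exactly $\rho$, and you correctly observe that the case of lower order must be checked separately (your Cauchy-estimate argument $\varphi(q)K_q^{1/q}\to 0$ via $r=t\varphi(q)$ and $\varphi(q)^{\rho'}=o(q)$ is a clean way to do it). This extra care is a genuine improvement in rigor rather than a different method, so the two proofs should be viewed as the same approach with your version closing a small gap.
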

\begin{proof}
We have that $f\in A_{\rho,\sigma+0}$ if and only if for any $\epsilon>0$ there exists a $D_\epsilon>0$ such that
$$ \|f(\vx)\|\leq D_\epsilon \exp((\sigma+\epsilon) \|\vx\|^{\rho(\vx)}) \quad\textrm{for all $\vx\in\mathbb R^{n+1}$}. $$
This is equivalent to
 \begin{equation}\label{newnova1}
 \limsup_{r\to+\infty} \frac{\sup_{\|\vx\|\leq r} \ln(\| f(\vx)\|)}{r^{\rho (r)}} \leq \sigma.
 \end{equation}
Since
$$
\limsup_{r\to+\infty} \frac{\sup_{\|\vx\|\leq r} \ln(\| f(\vx)\|)}{r^{\rho (r)}}
$$
 is the type of $f$ with respect to $\rho(r)$, by Theorem \ref{tsupernovissima1} and inequality \eqref{newnova1} we have
$$
\limsup_{q\to+\infty} \left( \frac \rho q \ln(K_q)+\rho\ln(\varphi(q)) \right)-\ln(e\rho)\leq\ln(\sigma).
$$
Finally, we can conclude that
$$
\limsup_{q\to+\infty}\, \ln\left( K_q^{\rho/q}\frac{\varphi(q)^\rho}{e\rho} \right)\leq \ln(\sigma)
$$
which gives
$$
\limsup_{q\to+\infty}\left( K_q \frac{\varphi(q)^q}{(e\rho)^{q/\rho}} \right)^{\rho/q}=\limsup_{q\to+\infty}\left( K_q G_{\hat{\rho},q} \right)^{\rho/q}\leq \sigma.
$$
\end{proof}
\begin{corollary}\label{csupersupernovissima2}
If $f(\bfx)=\sum_{|\bfm|=q}V_\bfm(\bfx) a_\bfm\in\mathcal M_L(\mathbb R^{n+1})$ belongs to $f\in A_{\rho, \sigma+0}$, then we have
\begin{equation}\label{nova}
\limsup_{q\to\infty}\left(\max_{|\bfm|=q } \|a_\bfm\|G_{\hat{\rho},q}\right)^{\rho/q}\leq n^\rho\sigma.
\end{equation}
Conversely, if $\{a_\bfm\}$ satisfies \eqref{nova}, then we have $f(\bfx)\in A_{\rho, n^\rho\sigma+0}$.
\end{corollary}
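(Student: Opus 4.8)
The plan is to reduce the statement to Lemma~\ref{lsupernovissima1}, which characterizes membership of a left monogenic entire function $\sum_{|\bfm|=0}^{\infty}V_\bfm(\bfx)b_\bfm$ in $A_{\rho,\tau+0}$ by the growth condition $\limsup_{q\to\infty}(K_qG_{\hat{\rho},q})^{\rho/q}\le\tau$, where $K_q=\sup_{\|\bfx\|\le1}\|\sum_{|\bfm|=q}V_\bfm(\bfx)b_\bfm\|$. Thus it suffices to produce two inequalities relating $K_q$ (formed from the Taylor coefficients $a_\bfm$ of $f$) and $\max_{|\bfm|=q}\|a_\bfm\|$ whose multiplicative errors have $q$-th root bounded by $n$ in one direction and by $1$ in the other; plugging these into the criterion of Lemma~\ref{lsupernovissima1} will give, respectively, \eqref{nova} and its converse.

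For the direct inequality I would apply the Cauchy-type estimate $\|a_\bfm\|\le c(n,\bfm)\,M(r,f)/r^{|\bfm|}$ recalled in Section~\ref{SEC2} to the left monogenic polynomial $P_q(\bfx)=\sum_{|\bfm|=q}V_\bfm(\bfx)a_\bfm$, whose Taylor coefficients are precisely the $a_\bfm$ with $|\bfm|=q$; taking $r=1$ gives $\|a_\bfm\|\le c(n,\bfm)M(1,P_q)\le c(n,\bfm)K_q$. Next, writing $c(n,\bfm)=\binom{n+q-1}{q}\,q!/\bfm!$ and using the multinomial identity $\sum_{|\bfm|=q}q!/\bfm!=n^q$, one gets $c(n,\bfm)\le\binom{n+q-1}{q}n^q$, whence $(\max_{|\bfm|=q}c(n,\bfm))^{1/q}\to n$ since $\binom{n+q-1}{q}=\binom{n+q-1}{n-1}$ is polynomial in $q$. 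Assuming $f\in A_{\rho,\sigma+0}$, the bound $\max_{|\bfm|=q}\|a_\bfm\|\le(\max_{|\bfm|=q}c(n,\bfm))K_q$, the inequality $\limsup(xy)\le(\limsup x)(\limsup y)$ for nonnegative sequences, and Lemma~\ref{lsupernovissima1} together give \eqref{nova}.

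For the converse I need the reverse comparison $K_q\le E_q\max_{|\bfm|=q}\|a_\bfm\|$ with $E_q^{1/q}\to1$. By the triangle inequality on the unit ball, $K_q\le C_n\binom{n+q-1}{q}\big(\sup_{\|\bfx\|\le1}\max_{|\bfm|=q}\|V_\bfm(\bfx)\|\big)\max_{|\bfm|=q}\|a_\bfm\|$, where $C_n$ depends only on $n$ and bounds Clifford multiplication, $\|cd\|\le C_n\|c\|\,\|d\|$. The decisive point is the sharp bound $\|V_\bfm(\bfx)\|\le\|\bfx\|^{|\bfm|}$, hence $\le1$ on the unit ball. To obtain it: each hypercomplex variable $z_\ell(\bfx)=x_\ell-x_0e_\ell$ is a paravector with $\|z_\ell(\bfx)\|=(x_0^2+x_\ell^2)^{1/2}\le\|\bfx\|$; for a paravector $v$ and arbitrary $a\in\mathbb R_n$ one has $\|va\|=\|v\|\,\|a\|$, because Clifford conjugation is an anti-automorphism, $\bar v\,v=\|v\|^2\in\mathbb R$ for paravectors, and the scalar part of $\bar b\,b$ equals $\|b\|^2$ for every $b\in\mathbb R_n$; therefore any product of $|\bfm|$ hypercomplex variables has norm $\le\|\bfx\|^{|\bfm|}$, and $V_\bfm(\bfx)$ is a normalized average of such products. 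With this, $(C_n\binom{n+q-1}{q})^{1/q}\to1$, so the hypothesis $\limsup_{q}(\max_{|\bfm|=q}\|a_\bfm\|\,G_{\hat{\rho},q})^{\rho/q}\le n^\rho\sigma$ forces $\limsup_q(K_qG_{\hat{\rho},q})^{\rho/q}\le n^\rho\sigma$, and Lemma~\ref{lsupernovissima1} yields $f\in A_{\rho,n^\rho\sigma+0}$.

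I expect the main obstacle to be precisely the sharp estimate $\|V_\bfm(\bfx)\|\le\|\bfx\|^{|\bfm|}$ with multiplicative constant equal to $1$: replacing it by a crude submultiplicativity bound for products of hypercomplex variables would worsen the constant in the converse from $n^\rho\sigma$ to $(C_nn)^\rho\sigma$, so one is forced to use the paravector structure of the hypercomplex variables (the identity $\|va\|=\|v\|\,\|a\|$) rather than the generic Clifford norm inequality. A secondary technical point is that the Cauchy inequality must be applied to the homogeneous block $P_q$, not to $f$ itself, so that evaluation at radius $1$ produces a bound involving $K_q$ alone; the $\limsup$'s of products are then handled by the elementary inequality mentioned above.
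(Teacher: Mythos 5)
Your argument matches the paper's proof in both directions: the forward inequality via the Cauchy estimate $\|a_\vm\|\le c(n,\vm)K_q$ applied to the homogeneous block $P_q$, combined with Lemma~\ref{lsupernovissima1} (the paper bounds $\max_{|\vm|=q}c(n,\vm)$ by $\sum_{|\vm|=q}c(n,\vm)$ where you work with the max directly; both have $q$-th root tending to $n$), and the converse via the Fueter-polynomial bound $\|V_\vm(\vx)\|\le\|\vx\|^{|\vm|}$ together with the polynomial count of multi-indices of length $q$. The paper invokes $\|V_\vm(\vx)\|\le\|\vx\|^{|\vm|}$ without comment; your explicit derivation of it from $\bar z_\ell\, z_\ell=|z_\ell|^2\in\mathbb R$ -- so that iterated left multiplication by the hypercomplex variables is norm-multiplicative and no generic Clifford constant $C_n$ accumulates per factor -- supplies a step the paper leaves implicit.
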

\begin{proof}
We have
$$
\|a_\vm\|=\frac 1{\vm!}\|\partial_\vx^\vm P_q(0)\|\leq \frac {c(n,\vm) \sup_{\|x\|\leq r} \|P_q(\vx)\|}{r^q}=c(n,\vm) \sup_{\|x\|\leq 1} \|P_q(\vx)\|=c(n,\vm) K_q.
$$
Combining the previous estimate with Lemma \ref{lsupernovissima1} we get that for any $\epsilon>0$ there exists $q_0>0$ such that for any $q>q_0$ we have
\[
\begin{split}
\left( \max_{|\vm|=q} \|a_\vm\| G_{\rho,q} \right)^{\rho/q}=\left( \max_{|\vm|=q} c(n,\vm) K_q G_{\rho, q} \right)^{\rho/q}& \leq \left( \left (\sum_{|\vm|=q} c(n,\vm)\right)^{1/q}\right)^\rho (K_q G_{\rho, q})^{\rho/q} \\
& \leq (n+\epsilon)^\rho(\sigma+\epsilon),
\end{split}
\]
which implies
$$
\limsup_{q\to+\infty} \, \left( \max_{|\vm|=q} \|a_\vm\| G_{\rho, q} \right)^{\rho/q}\leq n^\rho\sigma.
$$
Conversely, if we have \eqref{nova}, then for any $\epsilon>0$ there exists $C>0$ and $q_0>0$ such that for any $q>q_0$
we have
\[
\begin{split}
K_q=\sup_{\|\bfx\|\leq 1} \|P_q(\vx)\|\leq \sup_{\|\bfx\|\leq 1} \sum_{\|\vm\|=q} \|a_\vm\| \|\vx\|^q
\leq \max_{|\vm |=q} \|a_\vm\| G_{\rho,q} \frac{(q+1)^{n-1}}{G_{\rho,q}}
\leq (n^\rho\sigma+\epsilon)^{q/\rho} \frac{(q+1)^{n-1}}{G_{\rho,q}}.
\end{split}
\]
Thus, we have
$$
\limsup_{q\to +\infty}(K_q G_{\rho,q})^{\rho/q}\leq n^\rho\sigma
$$
and $f\in A_{\rho,n^\rho\sigma+0}$.
\end{proof}
The following proposition is a direct consequence of the previous corollary.
\begin{proposition}
A left monogenic entire function $f(\bfx)=\sum_{|\bfm|=q}V_\bfm(\bfx) a_\bfm\in\mathcal M_L(\mathbb R^{n+1})$ belongs to $A_\rho$ if and only if we have
$$\limsup_{q\to\infty}\left(\max_{|\bfm|=q } \|a_\bfm\|G_{\hat{\rho},q}\right)^{\rho/q}< \infty.$$
\end{proposition}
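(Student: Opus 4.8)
The plan is to read off the proposition directly from Corollary~\ref{csupersupernovissima2}, after re-expressing $A_\rho$ in terms of the spaces $A_{\rho,\sigma+0}$. First I would record the elementary inclusions
$$A_{\rho,\sigma}\subseteq A_{\rho,\sigma+0}\subseteq A_{\rho,\sigma'}\qquad\text{for }0<\sigma<\sigma',$$
where the first inclusion combines the definition $A_{\rho,\sigma+0}=\cap_{\epsilon>0}A_{\rho,\sigma+\epsilon}$ with the monotonicity $A_{\rho,\sigma}\subseteq A_{\rho,\sigma+\epsilon}$ coming from Lemma~\ref{lsupernova1}, and the second follows by picking $\epsilon>0$ with $\sigma+\epsilon<\sigma'$. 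These inclusions give $A_\rho=\bigcup_{\sigma>0}A_{\rho,\sigma}=\bigcup_{\sigma>0}A_{\rho,\sigma+0}$, so that $f\in A_\rho$ if and only if $f\in A_{\rho,\sigma+0}$ for some $\sigma>0$.

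For the ``only if'' direction I would assume $f\in A_\rho$, hence $f\in A_{\rho,\sigma+0}$ for some $\sigma>0$, and apply the first assertion of Corollary~\ref{csupersupernovissima2} to obtain
$$\limsup_{q\to\infty}\Big(\max_{|\bfm|=q}\|a_\bfm\|\,G_{\hat{\rho},q}\Big)^{\rho/q}\le n^\rho\sigma<\infty,$$
which is exactly the claimed finiteness.

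For the ``if'' direction I would set $L:=\limsup_{q\to\infty}\big(\max_{|\bfm|=q}\|a_\bfm\|\,G_{\hat{\rho},q}\big)^{\rho/q}$ and assume $L<\infty$. Choosing $\sigma>0$ large enough that $n^\rho\sigma\ge L$ (for instance $\sigma=\max\{L/n^\rho,\,1\}$), the Taylor coefficients of $f$ satisfy condition~\eqref{nova}, so the converse part of Corollary~\ref{csupersupernovissima2} yields $f\in A_{\rho,n^\rho\sigma+0}\subseteq A_\rho$, as required.

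I do not expect any genuine obstacle: the whole content is already packaged in Corollary~\ref{csupersupernovissima2}, and the only two points that need an explicit (and routine) remark are the identification $A_\rho=\bigcup_{\sigma>0}A_{\rho,\sigma+0}$ through the inclusion chain above, and the trivial observation that a finite $\limsup$ can always be absorbed into $n^\rho\sigma$ by enlarging $\sigma$.
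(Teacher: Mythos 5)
Your proof is correct and is precisely the unpacking the paper has in mind when it states that the proposition is ``a direct consequence of the previous corollary'': you translate between $A_\rho$ and the scale $A_{\rho,\sigma+0}$ via the obvious inclusions and then apply both halves of Corollary~\ref{csupersupernovissima2}. The only cosmetic remark is that the inclusion $A_{\rho,\sigma}\subseteq A_{\rho,\sigma+\epsilon}$ is immediate from the definition of the weighted sup-norm (and hence of the norm inequality $\|\cdot\|_{\rho,\sigma+\epsilon}\le\|\cdot\|_{\rho,\sigma}$), so no appeal to Lemma~\ref{lsupernova1} (which gives the stronger compactness) is needed there.
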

We now need some estimates of norms of Fueter polynomials.
\begin{lemma}\label{lsupersupernova1}
Suppose $\rho>0$. For any $\sigma$ and $\sigma'$ with $0<\sigma'<\sigma$, there exists $C>0$ such that for any $\bfm\in\mathbb N^n_0$, we have
$$\|V_\bfm(\bfx)\|_{\rho,\sigma}\leq C\sigma'^{-|\bfm|/\rho}G_{\hat{\rho},|\bfm|}.$$
\end{lemma}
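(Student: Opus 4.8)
\emph{Strategy and reduction.} The plan is to bound $\|V_\bfm\|_{\rho,\sigma}$ by a one-dimensional supremum over the radius $\|\bfx\|$ and then estimate that supremum through the properties of the normalized proximate order recalled above, chiefly Lemma~\ref{l24}. The only place where the monogenic structure enters is the elementary pointwise bound
$$\|V_\bfm(\bfx)\|\le\|\bfx\|^{|\bfm|},\qquad \bfx\in\mathbb R^{n+1},\ \bfm\in\mathbb N_0^n,$$
which replaces the trivial identity $|z^q|=|z|^q$ of the classical case; it follows from the definition of $V_\bfm$ and the identity $\|pw\|=\|p\|\,\|w\|$, valid for a paravector $p$ and any Clifford number $w$ because $\bar p p=|p|^2\in\mathbb R$, since peeling off the paravectors $z_1,\dots,z_n$ one at a time from the left bounds each of the $|\bfm|!/\bfm!$ words occurring in $V_\bfm(\bfx)$ by $\|\bfx\|^{|\bfm|}$, while the factor $\bfm!/|\bfm|!$ compensates their number. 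Splitting the supremum defining $\|V_\bfm\|_{\rho,\sigma}$ according to $\|\bfx\|\le r_1$ or $\|\bfx\|>r_1$ (with $r_1$ as in Definition~\ref{norm}, so that $\rho(r)=\hat{\rho}(r)$ for $r\ge r_1$), the first region contributes at most $r_1^{|\bfm|}$ and the second at most $\sup_{r>0}r^{|\bfm|}\exp(-\sigma r^{\hat{\rho}(r)})$.

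\emph{Estimating the scalar supremum.} Put $q=|\bfm|$ and substitute $t=r^{\hat{\rho}(r)}$, i.e.\ $r=\varphi(t)$; since $\varphi$ is the inverse of the strictly increasing bijection $r\mapsto r^{\hat{\rho}(r)}$ of $(0,+\infty)$ onto itself (Definition~\ref{norm}), one gets $\sup_{r>0}r^{q}\exp(-\sigma r^{\hat{\rho}(r)})=\sup_{t>0}\varphi(t)^{q}e^{-\sigma t}$. Factoring out $\varphi(q)^{q}$,
$$\varphi(t)^{q}e^{-\sigma t}=\varphi(q)^{q}\exp\!\Big(q\big(\ln\tfrac{\varphi(t)}{\varphi(q)}-\sigma\tfrac{t}{q}\big)\Big)=\varphi(q)^{q}\exp\big(q\,y_\sigma(q,t)\big),$$
with $y_\sigma$ as in Lemma~\ref{l24}. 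Applying that lemma to the given pair $\sigma'<\sigma$ provides $T_1>0$ with $y_\sigma(q,t)\le-\tfrac1\rho\ln(e\rho\sigma')$ whenever $q,t\ge T_1$, whence $\varphi(t)^{q}e^{-\sigma t}\le\varphi(q)^{q}(e\rho\sigma')^{-q/\rho}=\sigma'^{-q/\rho}G_{\hat{\rho},q}$ in that range. For $0<t<T_1$ the crude estimate $\varphi(t)^{q}e^{-\sigma t}\le\varphi(T_1)^{q}$ suffices, and the region $\|\bfx\|\le r_1$ contributed only $r_1^{q}$; since $\varphi(q)\to+\infty$ as $q\to+\infty$, both $\big(\varphi(T_1)(e\rho\sigma')^{1/\rho}/\varphi(q)\big)^{q}$ and $\big(r_1(e\rho\sigma')^{1/\rho}/\varphi(q)\big)^{q}$ are $\le1$ for all large $q$, i.e.\ $\varphi(T_1)^{q}$ and $r_1^{q}$ are themselves $\le\sigma'^{-q/\rho}G_{\hat{\rho},q}$ for $q$ large. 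Combining the three contributions gives $\|V_\bfm\|_{\rho,\sigma}\le\sigma'^{-|\bfm|/\rho}G_{\hat{\rho},|\bfm|}$ for all but finitely many $\bfm$.

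\emph{Remaining cases and the main obstacle.} For the finitely many multi-indices left out, $\|V_\bfm\|_{\rho,\sigma}$ is finite (a polynomial times $\exp(-\sigma r^{\rho(r)})$ with $\rho>0$ vanishes at infinity) while $\sigma'^{-|\bfm|/\rho}G_{\hat{\rho},|\bfm|}>0$, so after enlarging $C$ the asserted inequality holds for every $\bfm\in\mathbb N_0^n$. The step that needs care is arranging the variables so that Lemma~\ref{l24} applies cleanly, together with the bookkeeping of the two ``low-range'' contributions; the Fueter-polynomial estimate $\|V_\bfm(\bfx)\|\le\|\bfx\|^{|\bfm|}$ is the one point where the monogenic setting is felt, but being elementary it lets this lemma follow the classical argument directly---in contrast with Theorem~\ref{tsupernovissima1}.
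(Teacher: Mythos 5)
Your proposal is correct and follows essentially the same route as the paper: both rely on the pointwise bound $\|V_\bfm(\bfx)\|\le\|\bfx\|^{|\bfm|}$ (which the paper uses implicitly when it writes $\|V_\bfm(\bfx)e^{-\sigma r^{\rho(r)}}\|G^{-1}_{\hat\rho,q}\le r^q e^{-\sigma r^{\rho(r)}}G^{-1}_{\hat\rho,q}$), then substitute $t=r^{\hat\rho(r)}$, invoke Lemma~\ref{l24} for the region $q,t\ge T_1$, and dispatch the low-$t$ and low-$q$ ranges by crude bounds and an enlargement of the constant. The only cosmetic difference is that the paper reduces at the outset to $\rho=\hat\rho$ via the equivalence of $\|\cdot\|_{\rho,\sigma}$ and $\|\cdot\|_{\hat\rho,\sigma}$, whereas you keep $\rho$ and split the supremum at $\|\bfx\|=r_1$; the two devices are interchangeable. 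Your explicit justification of the Fueter-polynomial estimate via $\|p w\|=\|p\|\,\|w\|$ for a paravector $p$ is a welcome detail that the paper leaves tacit.
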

\begin{proof} Since the norms $\|\cdot\|_{\rho,\sigma}$ and $\|\cdot\|_{\hat\rho,\sigma}$ are equivalent, we may assume from the beginning that $\hat\rho(r)$ is as in Definition \ref{norm} and $\hat\rho(r)=\rho(r)$.

First we prove, for sufficiently large $r=\|\vx\|$ and $q=|\vm|$, that
$$
\|V_\vm(\vx) e^{-\sigma r^{\rho(r)}} \|\leq (\sigma')^{-|\vm|/\rho} G_{\hat{\rho},|\vm|}.
$$
Let $\varphi$ be the inverse function of $t=r^{\rho(r)}$. We define $r=\varphi(t)=\|\vx\|$. Thus we have
\[
\begin{split}
\|V_\vm(\vx) e^{-\sigma r^{\rho(r)}}\| G^{-1}_{\hat\rho, q} &\leq r^qe^{-\sigma r^{\rho(r)}} G^{-1}_{\hat\rho, q}\\
&=\exp\left (q\left (\ln(\varphi(t))-\sigma\frac tq-\frac 1q\ln(G_{\hat\rho,q})\right)\right)\\
&=\exp\left (q\left (\ln(\varphi(t))-\sigma\frac tq-\ln(\varphi(q))+\frac 1\rho\ln(e\rho)\right)\right)\\
&=\exp\left (q\left (\ln\left (\frac{\varphi(t)}{\varphi(q)} \right)-\sigma\frac tq+\frac 1\rho\ln(e\rho)\right)\right)\\
\end{split}
\]
By Lemma \ref{l24}, for any $0<\sigma'<\sigma$ there exists $T_1\geq 0$ such that
$$ \ln\left (\frac{\varphi(t)}{\varphi(q)} \right)-\sigma\frac tq+\frac 1\rho\ln(e\rho)\leq -\frac 1\rho\ln(\sigma')\quad\textrm{for any $q,t\geq T_1$}. $$
This implies
\begin{equation}\label{espuernovissima6}
\|V_\vm(\vx) e^{-\sigma r^{\rho(r)}}\| G^{-1}_{\hat\rho,q} \leq \exp \left (-\frac q\rho\ln\sigma' \right)=(\sigma')^{-q/\rho}
\end{equation}
for $|\vm|=q\geq T_1$ and $t\geq T_1$ (i.e. $\|\vx\|\geq \varphi(T_1)$). Next we consider the case $\|\vx\|\leq \varphi(T_1)$ (i.e. $t\leq T_1$) and $q=|\vm|>>1$. We have
\[
\begin{split}
\|V_\vm(\vx) e^{-\sigma r^{\rho(r)}}\| G^{-1}_{\hat\rho, q} &\leq r^qe^{-\sigma r^{\rho(r)}} G^{-1}_{\hat\rho, q}\\
&=\exp\left (q\left (\ln\left (\frac{\varphi(t)}{\varphi(q)} \right)-\sigma\frac tq+\frac 1\rho\ln(e\rho)\right)\right)\\
&\leq \exp\left( q\left(\ln \left( \frac {\varphi(T_1)}{\varphi(q)} \right) +\frac 1{\rho} \ln(e\rho)\right) \right).
\end{split}
\]
Note that for any given $\sigma'>0$, we can take $T_2\geq T_1$:
$$\ln\left( \frac{\varphi(T_1)}{\varphi(T_2)} \right) +\frac 1{\rho}\ln(e\rho)\leq -\frac 1\rho\ln(\sigma').$$
Thus we have
\begin{equation}\label{esupernovissima7}
\|V_\vm(\vx) e^{-\sigma r^{\rho(r)}}\| G^{-1}_{\hat\rho,q} \leq (\sigma')^{-q/\rho}
\end{equation}
for $|\vm|=q\geq T_2$ and $\|\vx\|\leq \varphi(T_1)$. By \eqref{espuernovissima6} and \eqref{esupernovissima7} for any $\vx\in\mathbb R^{n+1}$ and for any $|\vm|=q\geq T_2$, we have
$$ \|V_\vm(\vx) e^{-\sigma r^{\rho(r)}}\| G^{-1}_{\hat\rho,q} \leq (\sigma')^{-q/\rho}.$$
We know that there exists $C>0$ such that $\|V_\vm(\vx) e^{-\sigma r^{\rho(r)}}\|\leq C$ for any $|\vm|<T_2$ and for any $\vx\in\mathbb R^{n+1}$. Thus
$$ \|V_\vm(\vx) e^{-\sigma r^{\rho(r)}}\| G^{-1}_{\hat\rho,q} \leq C' (\sigma')^{-q/\rho} $$
where
$$
 C'=\sup_{|\vm|\leq T_2}\left(1, \frac C{(\sigma')^{-q/\rho} G_{\hat \rho, q}}\right).
 $$
\end{proof}

\begin{lemma}\label{lnova1}
There exists a constant $k$ depending only on $\rho$ for which the following statement holds: For any $\sigma>0$, we can take $C(\sigma)$ such that for any $f\in A_{\hat{\rho},\sigma}$, and any $\bfm\in\mathbb N_0^n$, the inequality
$$ \frac 1{\bfm!} \|\partial^\bfm_{\bfx} f(\bfx)\|_{\hat{\rho}, k\sigma}\leq C(\sigma)\|f\|_{\hat{\rho},\sigma}\frac{(2k(c(n,\vm))^{\rho/q}\sigma)^{q/\rho}}{G_{\hat{\rho},q}}$$
holds. Here we write $q=|\bfm|$.
\end{lemma}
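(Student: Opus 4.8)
The plan is to estimate the derivative $\partial_\bfx^\bfm f$ by the Cauchy-type integral representation for the coefficients of a monogenic function, and then transfer the growth bound from $f$ to $\partial_\bfx^\bfm f$ using the subadditivity property $(r+s)^{\hat\rho(r+s)}\le k(r^{\hat\rho(r)}+s^{\hat\rho(s)})+B$ of Lemma (the first unnumbered lemma on proximate orders) together with the bound on $\|V_\vm\|_{\rho,\sigma}$ from Lemma \ref{lsupersupernova1}. First I would fix $\bfx\in\mathbb R^{n+1}$ and write, using the Cauchy formula for derivatives on the sphere $\|\xi-\bfx\|=t$ with $t$ to be chosen depending on $\|\bfx\|$,
$$
\frac 1{\bfm!}\|\partial_\bfx^\bfm f(\bfx)\|\le \frac{c(n,\bfm)}{t^{|\bfm|}}\,M\bigl(t,f(\bfx+\,\cdot\,)\bigr)\le \frac{c(n,\bfm)}{t^{q}}\,\|f\|_{\hat\rho,\sigma}\,\exp\bigl(\sigma(\|\bfx\|+t)^{\hat\rho(\|\bfx\|+t)}\bigr),
$$
where $q=|\bfm|$; here the inner estimate $\|a_\bfm\|\le c(n,\bfm)M(r,f)r^{-|\bfm|}$ of the second theorem in Section \ref{SEC2} is applied to the monogenic function $\bfy\mapsto f(\bfx+\bfy)$ on the ball of radius $t$.

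Next I would apply the subadditivity lemma to get $(\|\bfx\|+t)^{\hat\rho(\|\bfx\|+t)}\le k\|\bfx\|^{\hat\rho(\|\bfx\|)}+k\,t^{\hat\rho(t)}+B$, so that
$$
\frac 1{\bfm!}\|\partial_\bfx^\bfm f(\bfx)\|\,\exp\bigl(-k\sigma\|\bfx\|^{\hat\rho(\|\bfx\|)}\bigr)\le e^{\sigma B}\,c(n,\bfm)\,\|f\|_{\hat\rho,\sigma}\,\frac{\exp\bigl(k\sigma\,t^{\hat\rho(t)}\bigr)}{t^{q}}.
$$
Taking the supremum over $\bfx$, the left-hand side is exactly $\frac 1{\bfm!}\|\partial_\bfx^\bfm f\|_{\hat\rho,k\sigma}$ (note $\partial_\bfx^\bfm f\in\mathcal M_L$ since $\mathcal D$ commutes with $\partial_{x_i}$, $i\ge 1$, and is therefore left monogenic), and it remains to choose $t=t(q)$ minimizing $t^{-q}\exp(k\sigma t^{\hat\rho(t)})$. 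This is the same optimization already carried out implicitly in the proof of Lemma \ref{lsupersupernova1}: setting $t^{\hat\rho(t)}=q/(k\sigma\rho)$, i.e. $t=\varphi\!\left(\tfrac{q}{k\sigma\rho}\right)$, and using the asymptotic $\varphi\!\left(\tfrac{q}{k\sigma\rho}\right)\sim (k\sigma\rho)^{-1/\rho}\varphi(q)$ from Theorem 1.23 of \cite{LG86}, one gets
$$
\frac{\exp\bigl(k\sigma t^{\hat\rho(t)}\bigr)}{t^{q}}\le \frac{e^{q/\rho}}{\varphi(q)^{q}}\,(k\sigma\rho)^{q/\rho}(1+o(1))^q=\frac{(e\rho k\sigma)^{q/\rho}}{\varphi(q)^{q}}(1+o(1))^q=\frac{(k\sigma)^{q/\rho}}{G_{\hat\rho,q}}(1+o(1))^q,
$$
by the definition \eqref{c1} of $G_{\hat\rho,q}$. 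Absorbing the factor $c(n,\bfm)$ as $(c(n,\bfm))^{\rho/q}$ raised to the power $q/\rho$ and letting the $(1+o(1))^q$ error, which is bounded uniformly in $q$ by a constant once one replaces $k\sigma$ by $2k\sigma$ inside the parenthesis, be swallowed into $C(\sigma)$, yields the claimed bound with the factor $(2k(c(n,\vm))^{\rho/q}\sigma)^{q/\rho}/G_{\hat\rho,q}$; the constant $k$ is the one from the subadditivity lemma and depends only on $\rho$.

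The main obstacle is not any single inequality but making the optimization in $t$ uniform in $q$: for small $q$ the value $\varphi\!\left(\tfrac{q}{k\sigma\rho}\right)$ need not be large, so the asymptotic estimate for $\varphi$ and the increasing behavior of $t\mapsto t^{\hat\rho(t)}$ only hold for $q\ge q_0(\sigma)$, and one must handle $q<q_0$ separately by a crude bound (choosing, say, $t=1$ and noting $G_{\hat\rho,q}$ is bounded below on a finite range) and then enlarge $C(\sigma)$ accordingly — this is exactly the role played by the factor $2k$ rather than $k$ in the statement, which provides the slack needed to dominate the sub-exponential errors $(1+o(1))^q$ and $(c(n,\bfm))^{\rho/q}\to 1$ uniformly. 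A secondary technical point is that $c(n,\bfm)=\tfrac{n(n+1)\cdots(n+q-1)}{\bfm!}$ is not a constant, so it must be carried through as $(c(n,\bfm))^{\rho/q}$ inside the base of the power exactly as written, rather than pulled out front; this is why the statement is phrased with $(c(n,\vm))^{\rho/q}$ sitting inside the parentheses.
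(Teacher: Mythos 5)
Your proof is correct and follows essentially the same route as the paper's: both apply the Cauchy coefficient inequality to the translated function $f(\bfx+\cdot)$, invoke the subadditivity $(r+s)^{\hat\rho(r+s)}\le k(r^{\hat\rho(r)}+s^{\hat\rho(s)})+B$ to split off $\exp(k\sigma\|\bfx\|^{\hat\rho(\|\bfx\|)})$, and then optimize the remaining factor $s^{-q}\exp(k\sigma s^{\hat\rho(s)})$ by taking $s=\varphi(q/(k\sigma\rho))$ and using the asymptotic $\varphi(q)/\varphi(q/(k\sigma\rho))\to(k\sigma\rho)^{1/\rho}$ to produce $G_{\hat\rho,q}$, absorbing the $(1+o(1))^q$ slack into the factor $2k$. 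The paper compresses the optimization and error-absorption into a citation of \cite[Lemma 3.8]{AIO20}, but the substance is the same.
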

\begin{proof}
There exists a constant $k>0$ which depends on $\rho$ and there exists a constant $B>0$ which depends on $\hat\rho(r)$ such that
$$
(r+s)^{\hat \rho(r+s)}\leq k(r^{\hat\rho(r)}+s^{\hat\rho(s)})+B\quad\textrm{for all $r,s>0.$}
$$
The Cauchy estimates give us, for $\|\vx\|\leq r$ and $|\vm|=q$ the chain of inequalities
\[
\begin{split}
\frac{\|\partial^\vm_\vx f(\vx)\|}{\vm!} & \leq
\inf_{s>0} \frac{c(n,\vm) \max_{|\xi|=s} \|f(\vx+\xi)\|}{s^{|\vm|}}
\\
&
\leq \|f\|_{\hat\rho,\sigma} \inf_{s>0} \frac{c(n,\vm)}{s^q} \exp\left( \sigma(r+s)^{\hat\rho(r+s)} \right)
\\
&
\leq \|f\|_{\hat\rho,\sigma} \inf_{s>0} \frac{c(n,\vm)}{s^q} \exp\left( k\sigma(r^{\hat\rho(r)}+s^{\hat\rho(s)}) + B\sigma \right)
\\
&
= e^{B\sigma}\|f\|_{\hat\rho,\sigma} c(n,\vm)  \exp\left( k\sigma r^{\hat\rho(r)}\right) \inf_{s>0} \frac{ \exp\left( k\sigma s^{\hat\rho(s)}\right)}{s^q}
\\
&
\leq e^{B\sigma}\|f\|_{\hat\rho,\sigma}   \exp\left( k\sigma r^{\hat\rho(r)}\right) \frac{\left( c(n,\vm)^{\rho/q} e\right)^{q/\rho}}{\varphi(q)^q} \left( \frac{\varphi(q)}{\varphi(q/(k\sigma\rho))} \right)^q
\\
&
\leq C(\sigma)\|f\|_{\hat\rho,\sigma} \exp\left( k\sigma r^{\hat\rho(r)}\right) \frac{\left( 2k c(n,\vm)^{\rho/q} \sigma\right)^{q/\rho}}{G_{\hat\rho,q}},
\end{split}
\]
where the last two inequalities are obtained as in \cite[Lemma 3.8]{AIO20}. In particular we have
$$ \frac{\|\partial_\bfx^\bfm f(\bfx)\|}{\bfm!} \exp\left( k\sigma \|\bfx\|^{\hat{\rho}(\|\bfx\|)} \right) \leq  C(\sigma)\|f\|_{\hat\rho,\sigma}  \frac{\left( 2k c(n,\vm)^{\rho/q} \sigma\right)^{q/\rho}}{G_{\hat\rho,q}},$$
which implies the inequality in the statement.
\end{proof}
\begin{remark}\label{rsupersupernovissima1}
Observe that since
$$\limsup_{q\to+\infty} \left( \sum_{|\vm|=q} c(n,\vm) \right)^{1/q}=n$$
there exists a constant $C(n)$ which depends only on $n$ such that for any $\vm\in\mathbb N^n$ we have
$$
(c(n,\vm))^{1/|\vm|}\leq C(n).
$$
Thus, in Lemma \ref{lnova1} the estimate can be rewritten in the following way
$$ \frac 1{\bfm!} \|\partial^\bfm_{\bfx} f(\bfx)\|_{\hat{\rho}, k\sigma}\leq C(\sigma)\|f\|_{\hat{\rho},\sigma}\frac{(2k(C(n))^{\rho}\sigma)^{q/\rho}}{G_{\hat{\rho},q}}.$$
\end{remark}
In view of the above stated properties, we can now  prove the following crucial results.
\begin{proposition}\label{psupersupernova1}
For an entire left monogenic function $f(\bfx)$ belonging to $A_{\rho,\sigma+0}$, its Taylor expansion $\sum_{\bfm\in\mathbb N^n}V_\bfm(\bfx) a_\bfm$ converges to $f(\bfx)$ in the space $A_{\rho,n^\rho\sigma+0}$.
In particular, the set of Fueter polynomials is dense in $A_{\rho,+0}$ and also dense in $A_\rho$.
\end{proposition}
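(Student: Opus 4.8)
The strategy is to show that the tail of the Taylor series $\sum_{|\bfm|=q}V_\bfm(\bfx)a_\bfm=P_q(\bfx)$ is summable in a norm $\|\cdot\|_{\rho,\tau}$ for a suitably enlarged $\tau$, and that the partial sums form a Cauchy sequence in $A_{\rho,n^\rho\sigma+0}$; since each $A_{\rho,\tau}$ is a Banach space the series then converges in $A_{\rho,n^\rho\sigma+0}$, and the limit must agree with $f$ because convergence in $A_{\rho,\tau}$ implies uniform convergence on compacta (cf. the argument in the proof of Lemma \ref{lsupernova1}), where we already know the Taylor series converges to $f$. The key quantitative input is Lemma \ref{lsupersupernova1}: for $0<\sigma'<\sigma''$ there is $C>0$ with $\|V_\bfm(\bfx)\|_{\rho,\sigma''}\le C\,\sigma'^{-|\bfm|/\rho}G_{\hat\rho,|\bfm|}$. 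Bounding $\|P_q\|_{\rho,\sigma''}$ requires also an estimate on the number of multi-indices of length $q$ together with a bound on the coefficients $a_\bfm$; the latter is exactly Corollary \ref{csupersupernovissima2}, which gives $\bigl(\max_{|\bfm|=q}\|a_\bfm\|G_{\hat\rho,q}\bigr)^{\rho/q}\le n^\rho\sigma+\epsilon$ for $q$ large.

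**Main estimate.** Fix $\epsilon>0$. By Corollary \ref{csupersupernovissima2} there is $q_0$ such that for $q\ge q_0$ we have $\max_{|\bfm|=q}\|a_\bfm\|\le (n^\rho\sigma+\epsilon)^{q/\rho}/G_{\hat\rho,q}$. Using Lemma \ref{lsupersupernova1} with parameters $\sigma'<\sigma''$ to be chosen, the number of multi-indices $|\bfm|=q$ being $\binom{q+n-1}{n-1}\le (q+1)^{n-1}$, and the triangle inequality,
\begin{equation}\label{eq:Pqbound}
\|P_q\|_{\rho,\sigma''}\le (q+1)^{n-1}\,\Bigl(\max_{|\bfm|=q}\|a_\bfm\|\Bigr)\,C\,\sigma'^{-q/\rho}G_{\hat\rho,q}\le C\,(q+1)^{n-1}\Bigl(\tfrac{n^\rho\sigma+\epsilon}{\sigma'}\Bigr)^{q/\rho}.
\end{equation}
Now choose $\sigma'$ with $n^\rho\sigma+\epsilon<\sigma'$, so that the geometric factor $\bigl((n^\rho\sigma+\epsilon)/\sigma'\bigr)^{q/\rho}$ decays exponentially in $q$ and dominates the polynomial factor $(q+1)^{n-1}$; then $\sum_{q\ge q_0}\|P_q\|_{\rho,\sigma''}<\infty$. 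Finally pick $\sigma''$ itself slightly larger, of the form $\sigma''=n^\rho\sigma+\epsilon'$ — since Lemma \ref{lsupersupernova1} only needs $\sigma'<\sigma''$, we may arrange $n^\rho\sigma+\epsilon<\sigma'<\sigma''=n^\rho\sigma+\epsilon'$ with $\epsilon'$ as small as we like. Thus the partial sums $\sum_{|\bfm|\le N}V_\bfm a_\bfm$ converge absolutely in $A_{\rho,n^\rho\sigma+\epsilon'}$ for every $\epsilon'>0$, hence in $A_{\rho,n^\rho\sigma+0}$.

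**Identification of the limit and the density statements.** Convergence in $A_{\rho,\tau}$ implies convergence uniformly on compact subsets of $\mathbb R^{n+1}$ (multiply by $\exp(\tau\|\bfx\|^{\rho(\|\bfx\|)})$, which is bounded on compacta); but on compacta the Taylor series of a left monogenic entire function converges to $f$ by the Taylor expansion theorem recalled in Section \ref{SEC2}. Hence the $A_{\rho,n^\rho\sigma+0}$-limit of the partial sums is $f$ itself. For the last assertion: if $f\in A_{\rho,+0}:=\bigcap_{\sigma>0}A_{\rho,\sigma}$, then $f\in A_{\rho,\sigma+0}$ for every $\sigma>0$, so by what we just proved the Fueter-polynomial partial sums converge to $f$ in $A_{\rho,n^\rho\sigma+0}$ for every $\sigma>0$; letting $\sigma\to 0^+$ shows the span of the Fueter polynomials is dense in $A_{\rho,+0}$. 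Density in $A_\rho=\bigcup_{\sigma>0}A_{\rho,\sigma}$ follows because any $f\in A_\rho$ lies in some $A_{\rho,\sigma}\subset A_{\rho,\sigma+0}$, and its partial sums converge to it in $A_{\rho,n^\rho\sigma+0}\subset A_\rho$.

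**Expected obstacle.** The routine part is the geometric-series bookkeeping in \eqref{eq:Pqbound}; the only genuinely delicate point is making sure the three growth parameters can be nested as $n^\rho\sigma<\sigma'<\sigma''$ while keeping $\sigma''$ arbitrarily close to $n^\rho\sigma$, so that the conclusion lands in $A_{\rho,n^\rho\sigma+0}$ rather than in a strictly larger space — this is where the "$+0$" (intersection over all enlargements) is essential, and where one must be careful that Lemma \ref{lsupersupernova1} and Corollary \ref{csupersupernovissima2} are each applied with a strict inequality with room to spare. One should also note, as in the normalization remark preceding Lemma \ref{lsupernova1}, that $A_\rho=A_{\hat\rho}$ with the same topology, so working with $\hat\rho$ throughout is harmless.
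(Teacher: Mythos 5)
Your proposal is correct and follows essentially the same route as the paper: combine Lemma \ref{lsupersupernova1} for the Fueter polynomials with Corollary \ref{csupersupernovissima2} for the coefficients, count the multi-indices of length $q$ by $(q+1)^{n-1}$, and sum a dominated geometric series. One small wrinkle in your write-up: you fix $\epsilon>0$ first and then claim you may take $\epsilon'$ ``as small as we like'' with $\epsilon<\sigma'-n^\rho\sigma<\epsilon'$, but that forces $\epsilon'>\epsilon$, so the quantifier order should be reversed --- start from the target margin $\epsilon'>0$, then choose $\epsilon<\epsilon'$ and $\sigma'$ strictly between $n^\rho\sigma+\epsilon$ and $n^\rho\sigma+\epsilon'$ (the paper does exactly this by nesting $\sigma+\epsilon/4<\sigma+\epsilon/2<\sigma+\epsilon$). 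Your explicit identification of the limit via uniform convergence on compacta is a valid filling-in of a step the paper leaves implicit, and your handling of the two density statements matches the paper's.
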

\begin{proof}
For the former statement, it suffices to show that
$$ \sum_{\vm\in\mathbb N^n} \|V_\vm(\vx) a_\vm \|_{\rho,n^\rho(\sigma+\epsilon)} $$
is finite for any $\epsilon>0$. By Lemma \ref{lsupersupernova1} we have
$$
\|V_\vm(\vx)\|_{\rho,n^\rho(\sigma+\epsilon)}\leq C_0(n^\rho(\sigma+\epsilon/2))^{-|\vm|/\rho} G_{\hat\rho,|\vm|}.
$$
On the other hand, by Corollary \ref{csupersupernovissima2}, we have
$$\max_{|\vm|=q} \|a_\vm\| G_{\hat\rho,q}\leq C_1(n^\rho(\sigma+\epsilon/4))^{q/\rho}.$$
Therefore, we have
\[
\begin{split}
\sum_{\vm\in\mathbb N^n} \|V_\vm(\vx)a_\vm\|_{\rho, n^\rho(\sigma+\epsilon)}&=\sum_{q\in\mathbb N}\sum_{|\vm|=q} \|a_\vm\| G_{\hat\rho,q} \|V_\vm(\vx)\|_{\rho,n^{\rho}(\sigma+\epsilon)} G^{-1}_{\hat\rho,q}\\
&\leq \sum_{q\in\mathbb N}\sum_{|\vm|=q} C_0 (n^\rho(\sigma+\epsilon/2))^{-q/\rho} C_1 (n^\rho(\sigma+\epsilon/4))^{q/\rho}\\
&\leq C_0C_1 \sum_{q\in\mathbb N} (q+1)^{n-1} \left( \frac{\sigma+\epsilon/4}{\sigma+\epsilon/2} \right)^{q/\rho},
\end{split}
\]
where the last series is convergent. For the latter statement in the case $f\in A_{\rho,+0}$, it follows from the former one with $\sigma=0$ that
$$
\lim_{q\to+\infty} \sum_{|\vm|\leq q} V_\vm(\vx) a_\vm=f(\vx)
$$
in the space $A_{\rho,+0}$. In the case $f\in A_\rho$, there exists $\sigma>0$ such that $f\in A_{\rho,\sigma+0}$. Then the same convergence holds in the space $A_{\rho,n^\rho\sigma+0}$ and therefore also in the space $A_\rho$.
\end{proof}
\begin{lemma}\label{l11}
Let $f\in A_{\rho,\sigma}$ and $f=\sum_{|\bf m|=0}^\infty V_{{\bf m}}({\bf x}) f_{{\bf m}}$. Let $s$ be a real positive number. Then, for any $\eta>0$ there exists $C_\eta>0$ such that for any ${\bf m}\in \mathbb N^n_0$, we have
$$ \|f_{{\bf m}}\|\exp(-\sigma(1+\eta)(s+1)^\rho r^{\rho(r)})\leq \exp(\sigma C_\eta) \frac{\|f\|_{\rho,\sigma} c(n,{\bf m})}{(sr)^{|{\bf m}|}}. $$
\end{lemma}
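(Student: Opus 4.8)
The plan is to derive the estimate from the monogenic Cauchy inequality for the Fueter coefficients, together with the elementary scaling behaviour of $r^{\rho(r)}$ for a proximate order. Fix $s>0$ and write $q=|\mathbf{m}|$, $\lambda=s+1$. First I would recall that for an entire left monogenic $f$ with Taylor expansion $f=\sum V_{\mathbf{m}}(\mathbf{x})f_{\mathbf{m}}$ one has, for every radius $\tilde r>0$, the Cauchy inequality $\|f_{\mathbf{m}}\|\le c(n,\mathbf{m})\,M(\tilde r,f)\,\tilde r^{-q}$ (Section \ref{SEC2}, applied with radius $R=+\infty$). Applying it with $\tilde r=\lambda r$, bounding $M(\lambda r,f)\le \|f\|_{\rho,\sigma}\exp(\sigma(\lambda r)^{\rho(\lambda r)})$ from $f\in A_{\rho,\sigma}$, and using $(\lambda r)^{q}\ge (sr)^{q}$, we obtain
\[
\|f_{\mathbf{m}}\|\le c(n,\mathbf{m})\,\frac{\|f\|_{\rho,\sigma}}{(sr)^{q}}\,\exp\bigl(\sigma(\lambda r)^{\rho(\lambda r)}\bigr).
\]
Hence the whole statement reduces to showing that $(\lambda r)^{\rho(\lambda r)}\le C_\eta+(1+\eta)(s+1)^{\rho}r^{\rho(r)}$ for all $r>0$, with $C_\eta$ independent of $r$ and of $\mathbf{m}$.

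For this I would invoke the classical scaling property of proximate orders: from condition (2) in the definition, $\lim_{r\to+\infty}\rho'(r)r\ln r=0$, one gets $\lim_{r\to+\infty}(\lambda r)^{\rho(\lambda r)}/r^{\rho(r)}=\lambda^{\rho}=(s+1)^{\rho}$ (by a mean value argument, writing $\rho(\lambda r)-\rho(r)=\rho'(\xi_r)(\lambda-1)r$ with $\xi_r\in(r,\lambda r)$ and noting $|\rho'(\xi_r)\,r\ln r|\le |\rho'(\xi_r)\,\xi_r\ln\xi_r|\to 0$; see also \cite[Ch.~1]{LG86}). Since $(s+1)^{\rho}<(1+\eta)(s+1)^{\rho}$, there is $R_0=R_0(\eta,s)>0$ with $(\lambda r)^{\rho(\lambda r)}\le (1+\eta)(s+1)^{\rho}r^{\rho(r)}$ for all $r\ge R_0$. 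On $0<r<R_0$ the function $t\mapsto t^{\rho(t)}$ is continuous and, since $\rho(t)\ln t$ stays bounded above as $t\to 0^{+}$, bounded on $(0,\lambda R_0]$; so $C_\eta:=\sup_{0<t\le\lambda R_0}t^{\rho(t)}<\infty$ and $(\lambda r)^{\rho(\lambda r)}\le C_\eta$ there. As $r^{\rho(r)}>0$, the two ranges together yield the desired bound for every $r>0$, whence $\exp(\sigma(\lambda r)^{\rho(\lambda r)})\le \exp(\sigma C_\eta)\exp(\sigma(1+\eta)(s+1)^{\rho}r^{\rho(r)})$. Substituting this into the displayed estimate and dividing through by $\exp(\sigma(1+\eta)(s+1)^{\rho}r^{\rho(r)})$ produces exactly the claimed inequality. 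If convenient, one may first replace $\rho$ by its normalization $\hat{\rho}$ (the norms $\|\cdot\|_{\rho,\sigma}$ and $\|\cdot\|_{\hat{\rho},\sigma}$ being equivalent), so that $t\mapsto t^{\hat{\rho}(t)}$ is genuinely increasing and one may simply take $C_\eta=(\lambda R_0)^{\hat{\rho}(\lambda R_0)}$.

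The argument is essentially routine; the only way the monogenic structure enters is through the combinatorial factor $c(n,\mathbf{m})$ carried by the Cauchy inequality, which is precisely why it shows up on the right-hand side of the statement, together with the need to keep every estimate uniform in $\mathbf{m}$ and in $r$. The one genuine analytic input is the proximate-order scaling asymptotics, and the role of the $\eta$-slack is exactly to pass from the sharp constant $(s+1)^{\rho}$ to $(1+\eta)(s+1)^{\rho}$, so that the bounded contribution coming from the small values of $r$ can be absorbed into $C_\eta$. The case $\mathbf{m}=\mathbf{0}$ requires no separate treatment, since $c(n,\mathbf{0})=1$ and $(sr)^{0}=1$.
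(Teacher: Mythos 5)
Your argument is correct and is essentially the paper's. The only superficial deviations are (a) in the Cauchy step: you apply the coefficient inequality directly at the origin with radius $(s+1)r$ and then replace $((s+1)r)^{|{\bf m}|}$ by the smaller $(sr)^{|{\bf m}|}$ in the denominator, whereas the paper applies the Cauchy estimate at radius $s\|{\bf x}\|$ around each point ${\bf x}$ on the sphere $\|{\bf x}\|=r$ (after invoking the maximum principle for $\partial^{\bf m}f$), which yields the factor $(sr)^{-|{\bf m}|}$ directly without the extra weakening — both chains land on $\|f_{\bf m}\|\le c(n,{\bf m})\,M((s+1)r,f)\,(sr)^{-|{\bf m}|}$; and (b) for the scaling inequality $((s+1)r)^{\rho((s+1)r)}\le(1+\eta)(s+1)^{\rho}r^{\rho(r)}+C_\eta$ the paper simply cites Lelong--Gruman \cite[Prop.~1.20]{LG86}, while you re-derive it via a mean-value argument using $\rho'(r)r\ln r\to0$ and handle the small-$r$ range separately — this is a correct self-contained proof of the cited fact. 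In short: same two ingredients (Cauchy inequality for Fueter coefficients and proximate-order scaling), same structure, with your Cauchy step slightly simpler and your scaling step spelled out rather than cited.
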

\begin{proof}
We have that
\[
\begin{split}
\|f_{{\bf m}}\|& =\frac{\|\partial^{\bf m}f (0)\|}{{\bf m}!}\leq \sup_{\|{\bf x}\|=r}\frac{\|\partial^{\bf m}f({\bf x})\|}{{\bf m}!}\leq \sup_{\|{\bf x}\|=r}\frac{ (\sup_{\|\zeta-{\bf x}\|=s\|{\bf x}\|} \|f(\zeta)\|) c(n,{\bf m})}{(sr)^{|{\bf m}|}}\\
&\leq \exp(\sigma((s+1)r)^{\rho((s+1)r)})  \frac{ \left( \sup_{\|{\bf x}\| =(s+1)r} \|f({\bf x})\| \exp(-\sigma((s+1)r)^{\rho((s+1)r)})\right) c(n,{\bf m})}{(sr)^{|{\bf m}|}}\\
&\leq \exp(\sigma(1+\eta)(s+1)^\rho r^{\rho(r)}+\sigma C_\eta) \frac{\|f\|_{\rho,\sigma} c(n,{\bf m})}{(sr)^{|{\bf m}|}},
\end{split}
\]
where in the first inequality we have used the maximum modulus principle, in the second inequality we have used the Cauchy inequalities in the ball centered at ${\bf x}$ with radius $s\|{\bf x}\|$, in the third inequality we have used the fact that all the balls centered at ${\bf x}$ with $\|{\bf x}\|=r$ of radius $s\|{\bf x}\|$ are contained in the ball centered at $0$ with radius $(s+1)r$ and in the last inequality we have used the fact that for any $\eta>0$ there exists a positive constant $C_\eta$ such that
$$ (kr)^{\rho(kr)} \leq (1+\eta) k^\rho r^{\rho(x)} + C_\eta, $$
see \cite[p. $16$, Proposition $1.20$]{LG86}.
\end{proof}

\begin{lemma}\label{l12}
Let $g_1({\bf x})=\sum_{|{\bf m}|=0}^{\infty} V_{\bf m}({\bf x})a_{{\bf m}}\in A_{\rho, \tau_1}$ and $g_2({\bf x})=\sum_{|{\bf m}|=0}^{\infty} V_{\bf m}({\bf x})b_{{\bf m}}\in A_{\rho, \tau_2}$. Let $\delta$ be a positive constant then for any $\eta>0$ there exists $C(n, \eta,\tau_1,\tau_2)>0$ such that
$$ \|g_1\odot_{CK} g_2({\bf x})\|_{A_{\rho, (1+\eta)(n+\delta+1)^\rho (\tau_1+\tau_2)}}\leq C(n, \eta,\tau_1,\tau_2) \|g_1\|_{\rho,\tau_1} \|g_2\|_{\rho,\tau_2}. $$
\end{lemma}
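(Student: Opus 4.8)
The plan is to reduce the estimate on $g_1\odot_{CK}g_2$ to the product of two geometric-type series: I would control the Taylor coefficients of $g_1$ and $g_2$ by Lemma \ref{l11} and absorb the combinatorial factors $c(n,\bfm)$ by Remark \ref{rsupersupernovissima1}. Throughout, $\odot_{CK}$ is the left CK-product $\odot_L$ of Definition \ref{CKPRD}.

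First I would unfold and factorize the product. Writing $g_1(\bfx)=\sum_\bfm V_\bfm(\bfx)a_\bfm$ and $g_2(\bfx)=\sum_\bfk V_\bfk(\bfx)b_\bfk$, Definition \ref{CKPRD} gives $g_1\odot_{CK}g_2(\bfx)=\sum_{\bfm,\bfk}V_{\bfm+\bfk}(\bfx)a_\bfm b_\bfk$. Using the bound $\|V_\bfm(\bfx)\|\le\|\bfx\|^{|\bfm|}$ (already exploited in the proof of Lemma \ref{lsupersupernova1}) together with submultiplicativity of the Clifford norm up to a dimensional constant $c_n$, one gets, with $r=\|\bfx\|$,
$$
\|g_1\odot_{CK}g_2(\bfx)\|\le c_n\sum_{\bfm,\bfk}r^{|\bfm|+|\bfk|}\|a_\bfm\|\,\|b_\bfk\|=c_n\Big(\sum_\bfm r^{|\bfm|}\|a_\bfm\|\Big)\Big(\sum_\bfk r^{|\bfk|}\|b_\bfk\|\Big),
$$
the rearrangement being legitimate since all terms are non-negative; the finiteness of the resulting bound will also show that the double series converges absolutely in the target space. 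The point of this step is that the double sum factorizes into two independent single sums.

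Next I would bound each single sum via Lemma \ref{l11} applied with the ball parameter $s:=n+\delta$, chosen precisely so that the exponent $(s+1)^\rho=(n+\delta+1)^\rho$ is the one occurring in the statement. For $g_1$ (with $\sigma=\tau_1$) this gives $\|a_\bfm\|\le \exp\big(\tau_1(1+\eta)(n+\delta+1)^\rho r^{\rho(r)}+\tau_1 C_\eta\big)\,\|g_1\|_{\rho,\tau_1}\,c(n,\bfm)\,((n+\delta)r)^{-|\bfm|}$ and similarly for $\|b_\bfk\|$; summing over $\bfm$ and cancelling $r^{|\bfm|}$ leaves the series $\sum_\bfm c(n,\bfm)(n+\delta)^{-|\bfm|}$. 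This is finite: by Remark \ref{rsupersupernovissima1}, $\limsup_{q\to\infty}\big(\sum_{|\bfm|=q}c(n,\bfm)\big)^{1/q}=n<n+\delta$, so taking the strictly smaller slack $\delta':=\delta/2$ one has $\sum_{|\bfm|=q}c(n,\bfm)\le C_\delta'(n+\delta/2)^q$ for all $q$, whence $\sum_\bfm c(n,\bfm)(n+\delta)^{-|\bfm|}\le C_\delta'\sum_{q\ge0}\big((n+\delta/2)/(n+\delta)\big)^q<\infty$, a constant depending only on $n$ and $\delta$.

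Finally, multiplying the bounds for the two factors, the exponential prefactors combine to $\exp\big((1+\eta)(n+\delta+1)^\rho(\tau_1+\tau_2)r^{\rho(r)}\big)$ times a constant (built from the two occurrences of $C_\eta$ and the two summed series); multiplying by $\exp\big(-(1+\eta)(n+\delta+1)^\rho(\tau_1+\tau_2)\|\bfx\|^{\rho(\|\bfx\|)}\big)$ and taking the supremum over $\bfx\in\mathbb R^{n+1}$ then yields exactly the claimed inequality, the constant $C(n,\eta,\tau_1,\tau_2)$ depending also on the fixed $\delta$. I do not anticipate a genuine obstacle: the argument is the monogenic analogue of the classical CK-product estimate, and the only real choices are the two just indicated — matching $s=n+\delta$ to the exponent appearing in the target space, and using a strictly smaller $\delta'$ to secure convergence of the combinatorial series. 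The only points that need a little care are keeping track of the dimensional Clifford constant and ensuring that Lemma \ref{l11} is applied with the \emph{same} $r=\|\bfx\|$ in both factors, so that the exponents add cleanly.
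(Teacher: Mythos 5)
Your argument is essentially identical to the paper's: same choice $s=n+\delta$ in Lemma \ref{l11}, same bound $\|V_{\bfm+\bfk}(\bfx)\|\le\|\bfx\|^{|\bfm|+|\bfk|}$ together with a dimensional Clifford constant ($2^n$ in the paper, your $c_n$), cancellation of the exponential weights, and the same root-test observation via Remark \ref{rsupersupernovissima1} that $\sum_{\bfm}c(n,\bfm)(n+\delta)^{-|\bfm|}<\infty$. The only cosmetic difference is that you factorize the double sum before applying Lemma \ref{l11}, while the paper keeps it as a double sum throughout; the estimates are the same.
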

\begin{proof}
Choosing $s=(n+\delta)$ in Lemma \ref{l11}, we have
\[
\begin{split}
& \|g_1\odot_{CK} g_2({\bf x})\|_{\rho, (1+\eta)(n+\delta+1)^\rho (\tau_1+\tau_2)}  \\
&\leq 2^n \sup_{{\bf x}\in\mathbb R^{n+1}} \left( \sum_{|{\bf m}|=0}^\infty \sum_{|{\bf \ell}|=0}^{\infty} \|V_{{\bf m}+{\bf \ell}} ({\bf x})\| \|a_m\|\|b_\ell\| \exp(-(1+\eta)(n+\delta+1)^\rho(\tau_1+\tau_2) \|{\bf x}\|^{\rho({\bf x})})\right)\\
& \leq C'(n,\eta,\tau_1,\tau_2)\|g_1\|_{\rho,\tau_1}\|g_2\|_{\rho,\tau_2} \sum_{|{\bf m}|=0}^\infty \sum_{|\ell|=0}^{\infty} \|x\|^{|{\bf m}|+|\ell|} \frac{c(n,{\bf m})} {(n+\delta)^{|{\bf m}|}\|{\bf x}\|^{|{\bf m}|}} \frac{c(n,{\bf \ell})} { (n+\delta)^{|{\bf \ell}|} \|{\bf x}\|^{|\ell|}} \\
& \leq C(n,\eta,\tau_1,\tau_2)\|g_1\|_{\rho,\tau_1}\|g_2\|_{\rho,\tau_2},
\end{split}
\]
where in the last inequality we used the fact that
\[
\sum_{|{\bf m}|=0}^{\infty} \frac{c(n,{\bf m})}{(n+\delta)^{|{\bf m}|}}< \infty
\]
since
\[
\limsup_{r\to\infty}\left(\frac {1}{(n+\delta)^r}\sum_{|m|=r} c(n,{\bf m}) \right)^{\frac 1r}=\frac n{(n+\delta)}<1.
\]
\end{proof}
The results of this section will be used to characterize
 continuous homomorphisms in terms of differential operators in the sense that will be specified in the next session.

\section{Differential operators, representations of continuous homomorphisms} \label{SEC3}
Before studying continuous homomorphisms between $A_{\rho_i}$ $(i=1,\, 2)$ and those between $A_{\rho_i,+0}$ $(i=1,\, 2)$, we define a differential operator representation of homomorphisms from
$$\mathbb R_n [\vx]:=\left\{\sum_{|\vm|=0}^q V_{\vm}(\vx) a_{\vm}:\, a_{\vm}\in \mathbb R_n \quad\textrm{and}\quad q\in\mathbb N_0 \right\}$$
to
$$\mathbb R_n [[\vx]] :=\left\{\sum_{|\vm|=0}^\infty V_{\vm}(\vx) a_{\vm}:\, a_{\vm}\in \mathbb R_n\right\}. $$
We define the space of formal right linear differential operators of infinite order with coefficients in $\mathbb R_n [[\vx]]$ by
$$ \hat{D}:=\left\{ P=\sum_{\vm\in\mathbb N^n_0} u_{\vm}(\vx)\odot_L\partial^\vm_\vx:\, u_\vm(\vx)\in \mathbb R_n [[\vx]] \right\}.$$
Note that $\hat{D}$ is linearly isomorphic to $\prod_{\vm\in\mathbb N^n} \mathbb R_n [[\vx]]$, by the correspondence
$$ \sum_{\vm\in\mathbb N^n_0} u_{\vm}(\vx) \odot_L\partial^\vm_\vx\mapsto (u_{\vm}(\vx))_{\vm\in\mathbb N^n}.$$
\begin{proposition}\label{pnova1}
There are two left linear isomorphisms:
$$\hat{D}\leftrightarrow \operatorname{Hom}_{\mathbb R_n}(\mathbb R_n [\vx],\, \mathbb R_n [[\vx]])\leftrightarrow \prod_{\vm\in\mathbb N^n} \mathbb R_n [[\vx]],$$
where the first and second mappings are given by
\[
\begin{split}
& \sum_{\vm\in\mathbb N^n_0} u_{\vm}(\vx)\odot_L\partial^\vm_\vx\mapsto\left ( \mathbb R_n[\vx]\ni\sum_{\vm} V_{\vp}(\vx) f_{\vp}\mapsto \sum_{\vm\leq \vp}  \frac{\vp !}{(\vp-\vm)!} u_{\vm} (\vx)\odot_L V_{\vp-\vm} (\vx) f_{\vp} \right ),\\
& F\mapsto (F(V_{\vm} (\vx))/\vm!)_{\vm\in\mathbb N^n_0},
\end{split}
\]
respectively.
\begin{proof}
We can easily see that both mappings are injective left linear mappings between vector spaces and that their composition is given by
$$ \sum_{\vm} u_{\vm}(\vx) \odot_L \partial^\vm_\vx\mapsto \left( \sum_{\vm\leq \vp} u_{\vm}(\vx) \odot_L \frac{V_{\vp-\vm}(\vx)}{(\vp-\vm)!}\right)_{\vp\in\mathbb N^n_0}. $$
Therefore, it suffices to show that the relations
\begin{equation}\label{fnova1}
b_\vp(\vx)=\sum_{\vm\leq\vp} u_{\vm}(\vx) \odot_L \frac{V_{\vp-\vm}(\vx)}{(\vp-\vm)!},\quad \vp\in\mathbb N^n_0,
\end{equation}
induce a bijection
$$ \prod_{\vm\in\mathbb N^n_0} \mathbb R_n [[\vx]] \ni(u_{\vm}(\vx))_{\vm\in\mathbb N^n}\mapsto (b_\vp(\vx))_{\vp\in\mathbb N^n} \in \prod_{\vp\in\mathbb N^n_0} \mathbb R_n[[\vx]]. $$
This follows from the fact that the relation \eqref{fnova1} can be inverted as
 \begin{equation}\label{fnova2}
u_{\vm}(\vx)=\sum_{\vp\leq\vm} b_{\vp}(\vx) \odot_L \frac{V_{\vm-\vp}(-\vx)}{(\vm-\vp)!}.
\end{equation}
In fact, we can calculate the $\vp$-element of the image of $(u_{\vm}(\vx))_{\vm\in\mathbb N_0^n}$ by composition of \eqref{fnova2} and \eqref{fnova1} as
\[
\begin{split}
\sum_{\vs\leq \vp} \sum_{\vm\leq \vs}  \left( u_{\vm}(\vx)\odot_L  \frac{V_{\vs-\vm}(\vx)}{(\vs-\vm)!} \right) & \odot_L \frac{V_{\vp-\vs}(-\vx)}{(\vp-\vs)!}  =\sum_{\vs\leq \vp} \sum_{\vm\leq \vs}  u_{\vm}(\vx)\odot_L \left( \frac{V_{\vs-\vm}(\vx)}{(\vs-\vm)!} \odot_L \frac{V_{\vp-\vs}(-\vx)}{(\vp-\vs)!} \right)\\
&=\sum_{\vm\leq\vp } u_{\vm}(\vx) \odot_L \left( \sum_{\vm\leq\vs\leq\vp} \frac{V_{\vs-\vm}(\vx)\odot_L V_{\vp-\vs}(-\vx)}{(\vs-\vm)!(\vp-\vs)!} \right)\\
&=\sum_{\vm\leq\vp } u_{\vm}(\vx) \odot_L \left( \sum_{\vm\leq\vs\leq\vp} \frac{(\vp-\vm)! (-1)^{|\vp-\vs|} V_{\vp-\vm}(\vx) }{(\vs-\vm)!(\vp-\vs)!(\vp-\vm)!} \right) \\
&=\sum_{\vm\leq\vp } u_{\vm}(\vx) \odot_L \left( \frac{(1-1)^{|\vp-\vm|} V_{\vp-\vm}(\vx) }{(\vp-\vm)!} \right)=u_{\vp}(\vx), \\
\end{split}
\]
which implies that the composition is the identity. We can similarly show that the composition of \eqref{fnova1} and \eqref{fnova2} is the identity.
\end{proof}
\end{proposition}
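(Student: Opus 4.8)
The plan is to reduce the whole statement to the invertibility of a single explicit triangular correspondence on coefficient tuples, and then to produce its inverse by hand. The facts about Fueter polynomials I would rely on are: the classical derivative rule $\partial^\vm_\vx V_\vp(\vx)=\frac{\vp!}{(\vp-\vm)!}V_{\vp-\vm}(\vx)$ for $\vm\le\vp$ (and $0$ otherwise), which is exactly what makes the displayed formula for the first map well defined; the identity $\lambda V_{\mathbf a}(\vx)\odot_L\mu V_{\mathbf b}(\vx)=\lambda\mu\,V_{\mathbf a+\mathbf b}(\vx)$ for real scalars $\lambda,\mu$, immediate from Definition \ref{CKPRD}, together with the unit property $g\odot_L V_{\mathbf 0}=g$ and the associativity of $\odot_L$; and the parity $V_{\mathbf a}(-\vx)=(-1)^{|\mathbf a|}V_{\mathbf a}(\vx)$, clear from the definition of $V_{\mathbf a}$ as a symmetrized product of the linear Fueter variables.

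First I would dispatch the second arrow $F\mapsto(F(V_\vm(\vx))/\vm!)_\vm$: it is a left-linear isomorphism essentially by definition, since $\{V_\vm(\vx)\}_{\vm\in\mathbb N_0^n}$ is a free $\mathbb R_n$-basis of $\mathbb R_n[\vx]$, so a homomorphism out of $\mathbb R_n[\vx]$ is determined by, and may be prescribed arbitrarily on, these generators. Because the tautological correspondence $\sum_\vm u_\vm(\vx)\odot_L\partial^\vm_\vx\mapsto(u_\vm(\vx))_\vm$ is likewise a left-linear isomorphism $\hat D\cong\prod_\vm\mathbb R_n[[\vx]]$ (as recorded just before the statement), the entire proposition comes down to checking that the first arrow $\hat D\to\operatorname{Hom}_{\mathbb R_n}(\mathbb R_n[\vx],\mathbb R_n[[\vx]])$ is a left-linear bijection; its additivity and left-linearity are routine because the coefficients $f_\vp$ sit to the right of all CK-products, so the real work is surjectivity and injectivity.

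Next I would apply the derivative rule term by term to see that the composite $\hat D\to\operatorname{Hom}_{\mathbb R_n}(\mathbb R_n[\vx],\mathbb R_n[[\vx]])\to\prod_\vm\mathbb R_n[[\vx]]$ is the map $(u_\vm(\vx))_\vm\mapsto(b_\vp(\vx))_\vp$ with $b_\vp(\vx)=\sum_{\vm\le\vp}u_\vm(\vx)\odot_L\frac{V_{\vp-\vm}(\vx)}{(\vp-\vm)!}$. Since both legs out of $\hat D$ and out of $\operatorname{Hom}_{\mathbb R_n}(\mathbb R_n[\vx],\mathbb R_n[[\vx]])$ into $\prod_\vm\mathbb R_n[[\vx]]$ are already known isomorphisms, it now suffices to show that $(u_\vm)_\vm\mapsto(b_\vp)_\vp$ is a bijection of $\prod_\vm\mathbb R_n[[\vx]]$ with itself. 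For this I would propose the explicit inverse $u_\vm(\vx)=\sum_{\vp\le\vm}b_\vp(\vx)\odot_L\frac{V_{\vm-\vp}(-\vx)}{(\vm-\vp)!}$ and verify that both composites equal the identity. Substituting one formula into the other, using associativity of $\odot_L$ to pull $u_\vm(\vx)$ (resp.\ $b_\vp(\vx)$) out in front, then $V_{\mathbf a}\odot_L V_{\mathbf b}=V_{\mathbf a+\mathbf b}$ together with the parity relation to collapse the two inner Fueter polynomials into one $V_{\vp-\vm}(\vx)$, the $\vp$-th entry becomes $\sum_{\vm\le\vp}u_\vm(\vx)\odot_L\big(\sum_{\vm\le\vs\le\vp}\frac{(-1)^{|\vp-\vs|}}{(\vs-\vm)!(\vp-\vs)!}\big)V_{\vp-\vm}(\vx)$, and the coordinatewise binomial identity $\sum_{\vm\le\vs\le\vp}\frac{(\vp-\vm)!\,(-1)^{|\vp-\vs|}}{(\vs-\vm)!(\vp-\vs)!}=(1-1)^{|\vp-\vm|}$ (equal to $1$ when $\vp=\vm$ and to $0$ otherwise) wipes out every term but $\vm=\vp$, leaving $u_\vp(\vx)\odot_L V_{\mathbf 0}=u_\vp(\vx)$; the reverse composite is handled in the same way, with $\vx$ replaced by $-\vx$ at the relevant step. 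This makes $\hat D\to\operatorname{Hom}_{\mathbb R_n}(\mathbb R_n[\vx],\mathbb R_n[[\vx]])$ the composition of a bijection with the inverse of a bijection, hence a left-linear isomorphism, and along the way identifies its value on $\sum_\vm u_\vm(\vx)\odot_L\partial^\vm_\vx$ with the homomorphism displayed in the statement.

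The only genuinely non-routine point I expect is the bookkeeping around the CK-product in the last step: one must be sure that $\odot_L$ is associative and that it interacts with real scalars and with the right-hand $\mathbb R_n$-coefficients in exactly the transparent way dictated by Definition \ref{CKPRD}, so that reordering the double sums and factoring $u_\vm(\vx)$ (resp.\ $b_\vp(\vx)$) out of the CK-products is legitimate. Once this is granted, the argument is just the familiar ``an upper-triangular operator with invertible diagonal is invertible'' inversion from the one-variable theory, carried out with multi-indices, and — since $\mathbb R_n[\vx]$ and $\mathbb R_n[[\vx]]$ are purely formal objects — needs no convergence or growth estimates at all.
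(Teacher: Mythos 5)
Your proof is correct and follows essentially the same route as the paper's: reduce to showing the triangular correspondence $(u_\vm)\mapsto(b_\vp)$ on $\prod_\vm\mathbb R_n[[\vx]]$ is a bijection, exhibit the explicit inverse $u_\vm=\sum_{\vp\le\vm}b_\vp\odot_L V_{\vm-\vp}(-\vx)/(\vm-\vp)!$, and verify it via the parity of $V_\vm$, the product rule $V_{\mathbf a}\odot_L V_{\mathbf b}=V_{\mathbf a+\mathbf b}$, and the multi-index binomial identity $(1-1)^{|\vp-\vm|}$. Your added remark that the associativity step is legitimate only because the Fueter polynomial factors carry real scalar coefficients is a worthwhile caveat that the paper leaves implicit, but it does not change the argument.
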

Now we study continuous homomorphism from $A_{\rho_1}$ and $A_{\rho_2}$ and those from $A_{\rho_1,+0}$ and $A_{\rho_2,+0}$ where $\rho_i(r)$ ($i=1,\,2$) are two proximate orders for positive orders $\rho_i=\lim_{r\to\infty}\rho_i(r)>0$, satisfying
\begin{equation}\label{supernova}
r^{\rho_1(r)}=O(r^{\rho_2(r)}),\quad\textrm{as $r\to\infty$.}
\end{equation}
\begin{definition}\label{dnova1}
Let $\rho_i$ ($i=1,\, 2$) be two proximate orders for orders $\rho_i>0$ satisfying \eqref{supernova}. We take normalization $\hat{\rho_1}$ of $\rho_1$ as in Definition \ref{norm} and $G_{\hat{\rho}_1,q}$ by \eqref{c1}. We denote by $\mathbf{D}_{\rho_1\to\rho_2}$ and by $\mathbf{D}_{\rho_1\to\rho_2,0}$ the sets of all formal right linear differential operator $P$ of the form
$$ P=\sum_{\bfm\in\mathbb N^n}u_\bfm(\bfx) \odot_L \partial_{\bfx}^{\bfm} $$
where the multisequence $(u_\bfm(\bfx))_{\bfm\in\mathbb N^n_0}\subset A_{\rho_2}$ satisfies
\begin{equation}\label{enova1}
\forall\lambda>0,\, \exists \sigma>0,\, \exists C>0,\, \forall \bfm\in\mathbb N^n_0,\, \|u_\bfm\|_{\rho_2,\sigma}\leq C\frac {G_{\hat{\rho}_1,|\bfm|}}{\bfm!}\lambda^{|\bfm|}
\end{equation}
and
\begin{equation}\label{enova2}
\forall\sigma>0,\, \exists \lambda>0,\, \exists C>0,\, \forall \bfm\in\mathbb N^n_0,\, \|u_\bfm\|_{\rho_2,\sigma}\leq C\frac {G_{\hat{\rho}_1,|\bfm|}}{\bfm!}\lambda^{|\bfm|},
\end{equation}
respectively. Note that in the latter case, each $u_\bfm$ belongs to $A_{\rho_2,+0}$.
\end{definition}

For the following remark see also \cite[Remark $4.4$]{AIO20}.

\begin{remark}\label{rnova1}
By adding $\ln(c)/\ln(r)$ for a constant $c>0$ (with a suitable modification near $r=0$) to a proximate order $\rho_2(r)$ with order $\rho_2$, we get a new proximate order $\tilde{\rho}_2(r)$ for the same order $\rho_2$ satisfying
$$
\tilde{\rho}_2(r)=\rho_2(r)+\ln(c)/\ln(r)
$$
 for $r>1$, that is,
 $$
 r^{\tilde{\rho}_2(r)}=cr^{\rho_2(r)},
 $$
 eventually.
 Then $\|\cdot\|_{\tilde{\rho}_2,\sigma}$ and $\|\cdot\|_{\rho_2,c\sigma}$ become equivalent norms for $\sigma>0$, and the spaces $A_{\tilde{\rho}_2}$ and $A_{\tilde{\rho}_2,+0}$ are homeomorphic to $A_{\rho_2}$ and $A_{\rho_2,+0}$ respectively. By taking $c$ sufficiently large, we can take $\tilde{\rho}_2$ as
$$ \rho_1(r)\leq \tilde{\rho}_2(r), \quad\textrm{ for $r\geq r_0$} $$
for a suitable $r_0$, and we can choose normalizations $\hat{\rho}_1$ of $\rho_1$ and $\hat{\rho}_2$ of $\tilde{\rho}_2$ as
\newline
\newline
$(i)$ $\hat{\rho}_1(r)\leq \hat{\rho}_2(r)$ for $r\geq 0$.
\newline
\newline
Since a proximate order and its normalization define equivalent norms, we have
\newline
\newline
$(ii)$ $\| \cdot \|_{\hat{\rho}_2,\sigma}$ and $\| \cdot \|_{\rho_2,c\sigma}$ are equivalent for any $c>0$,
\newline
$(iii)$ $\mathbf{D}_{\rho_1\to\rho_2}=\mathbf{D}_{\hat{\rho}_1\to\hat{\rho}_2}$, $\mathbf{D}_{\rho_1\to\rho_2, 0}=\mathbf{D}_{\hat{\rho}_1\to\hat{\rho}_2,0}$.
\newline
\newline
Note further that Theorems \ref{main1} and \ref{main2} below are not affected by the replacement of $\rho_1$ and $\rho_2$ by $\hat{\rho}_1$ and $\hat{\rho}_2$.
\end{remark}
Now we will prove:
\begin{theorem}\label{main1}
Let $\rho_i(r)$ ($i=1,\, 2$) be two proximate orders for orders $\rho_i>0$ satisfying \eqref{supernova}.
\begin{enumerate}
\item Suppose that $P=\sum_{\bfm\in\mathbb N^n}u_\bfm(\bfx) \odot_L \partial_{\bfx}^{\bfm}\in\mathbf{D}_{\rho_1\to\rho_2}$. For a left monogenic entire function $f\in A_{\rho_1}$,
$$ Pf=\sum_{\bfm\in\mathbb N^n}u_\bfm(\bfx) \odot_L \partial_{\bfx}^{\bfm} f $$
converges and $Pf\in A_{\rho_2}$. Moreover, $f\mapsto Pf$ defines a continuous right linear homomorphism $P: A_{\rho_1}\to A_{\rho_2}$.
\item Let $F:A_{\rho_1}\to A_{\rho_2}$ be a continuous right linear homomorphism. Then there is a unique $P\in\mathbf{D}_{\rho_1\to\rho_2}$ such that $Ff=Pf$ holds for any $f\in A_{\rho_1}$.
\end{enumerate}
\end{theorem}
\begin{proof}
We can replace $\rho_i$ by $\hat{\rho}_i$ ($i=1,\, 2$) as in Remark \ref{rnova1}, and we may assume from the beginning that $\rho_i$ $(i=1,\, 2)$ are normalized proximate orders satisfying
$$ \rho_1(r)\leq \rho_2(r),\quad \textrm{for $r\geq 0$}, $$
which implies
 \begin{equation}\label{esupersupernova1}
 \|f\|_{\rho_2,\tau}\leq \|f\|_{\rho_1,\tau}
 \end{equation}
 for any $f$ and $\tau>0$.

$(1)$ We fix $\eta>0$ and $C_\eta>0$ in a such a way that
$$  (cr)^{\rho_2(cr)} \leq (1+\eta) c^{\rho_2} r^{\rho_2(r)}+C_\eta $$
see \cite[p. $16$, Proposition $1.20$]{LG86}. Using Lemma \ref{lnova1}, Lemma \ref{l12}, Remark \ref{rsupersupernovissima1} and the estimate \eqref{enova1}  for $(u_{\mathbf{m}}(\mathbf{x}))_{\mathbf{m}\in\mathbb N^n}$ in Definition \ref{dnova1}, we have a constant $k=k(\rho_1)$ depending only on $\rho_1$ such that for any $\epsilon>0$, $\tau>0$ there exists positive constants $\sigma(\epsilon)$, $C(n,\eta,\sigma(\epsilon), k\tau)$ and $C'(n,\eta,\sigma(\epsilon), k\tau)$ with the estimate
\begin{equation}\label{enovissima1}
\begin{split}
&\sum_{\mathbf{m}\in\mathbb N^n_0}\|u_{\mathbf{m}} \odot_L \partial^{\mathbf{m}}_{\mathbf{x}} f\|_{\rho_2, (1+\eta)(n+\delta+1)^{\rho_2}(\sigma(\epsilon)+k\tau)}\\
&\leq C(n,\eta,\sigma(\epsilon), k\tau) \sum_{\mathbf{m}\in\mathbb N^n_0} \|u_{\mathbf{m}}\|_{\rho_2,\sigma(\epsilon)} \|\partial^{\mathbf{m}}_{\mathbf{x}} f\|_{\rho_2,k\tau}\\
&\leq C(n,\eta,\sigma(\epsilon), k\tau) \sum_{\mathbf{m}\in\mathbb N^n_0} \|u_{\mathbf{m}}\|_{\rho_2,\sigma(\epsilon)} \|\partial^{\mathbf{m}}_{\mathbf{x}} f\|_{\rho_1,k\tau}\\
&\leq C'(n,\eta,\sigma(\epsilon), k\tau) \sum_{\mathbf{m}\in\mathbb N^n_0} \frac{G_{\rho_1,|\mathbf{m}|}}{\mathbf{m}!} \epsilon^{|\mathbf{m}|} \|f\|_{\rho_1,\tau} \frac{\mathbf{m}!}{G_{\rho_1,|\mathbf{m}|}}(2k(c(n,\vm))^{\rho_1/|\vm|} \tau)^{|\mathbf{m}|/\rho_1}\\
&= C'(n,\eta,\sigma(\epsilon), k\tau) \|f\|_{\rho,\tau} \sum_{q\in\mathbb N_0} \sum_{|\mathbf{m}|=q} \epsilon^q(2k(C(n))^{\rho_1/q} \tau)^{q/\rho_1}\\
&\leq C'(n,\eta,\sigma(\epsilon), k\tau) \|f\|_{\rho,\tau}  \sum_{q=0}^{+\infty} (q+1)^{n-1} \epsilon^q(2k(C(n))^{\rho_1/q} \tau)^{q/\rho_1}.
\end{split}
\end{equation}
In view of Remark \ref{rsupersupernovissima1}, the last sum converges if $\epsilon<(C(n)(2k\tau)^{1/\rho_1})^{-1}$. For such a choice of $\epsilon>0$ depending on $\rho_1$ and $\tau$, we set
$$
\tau'(\tau):= (1+\eta)(n+\delta+1)^{\rho_2}(\sigma(\epsilon)+k\tau)
$$
 and
$$ C''= C'(n,\eta,\sigma(\epsilon), k\tau) \sum_{q=0}^{+\infty} (q+1)^{n-1} \epsilon^q(2k(C(n))^{\rho_1} \tau)^{q/\rho_1}. $$
Then, $\sum_{|\mathbf{m}|} u_{\mathbf{m}}(\mathbf{x})\odot_L \partial^{\mathbf{m}}_{\mathbf{x}} f$ converges in $A_{\rho_2,\tau'(\tau)}$ and defines an element $Pf\in A_{\rho_2,\tau'(\tau)}$ satisfying
$$ \|Pf\|_{\rho_2,\tau'(\tau)}\leq C''\|f\|_{\rho_1,\tau}. $$
Since $f\in A_{\rho_1,\tau}$ was chosen arbitrarily, this implies the well-definedness and the continuity of $P:A_{\rho_1,\tau}\to A_{\rho_2,\tau'(\tau)}$. Also since $\tau>0$ was chosen arbitrarily, we get the well-definedness and the continuity of $P:A_{\rho_1}\to A_{\rho_2}$ by the definition of the inductive limit of locally convex spaces.

$(2)$ Let $F: A_{\rho_1}\to A_{\rho_2}$ be a continuous right linear homomorphism. Then, thanks to the theory of locally convex spaces, we can conclude, using Lemma \ref{lsupernova1}, that for any $\tau>0$, there exists $\tau'=\tau'(\tau)>0$ such that $F(A_{\rho_1,\tau})\subset A_{\rho_2,\tau'(\tau)}$ and that
$$ F: A_{\rho_1,\tau}\to A_{\rho_2,\tau'(\tau)} $$
is continuous. Therefore, we can in particular take $C(\tau)$ depending on $\tau>0$ for which
\begin{equation}\label{fsupernova1}
\|F f\|_{\rho_2,\tau'(\tau)}\leq C(\tau)\|f\|_{\rho_1,\tau},\quad\textrm{for any $f\in A_{\rho_1,\tau}$}.
\end{equation}
Let us define a multi-sequence of entire functions $(u_{\vm}(\vx))_{\vm\in\mathbb N^n_0}$ by
$$ u_{\vm}(\vx)=\sum_{\vs\leq \vm}\frac{F(V_{\vs}(\vx)) \odot_L V_{\vm-\vs}(-\vx)}{(\vm-\vs)! \vm!}, $$
whose convergence in $A_{\rho_2}$ will be proved together with their estimates. We define a formal differential operator $P\in \hat{D}$ of infinite order by
\begin{equation}\label{dnovissima2}
P=\sum_{\vm} u_{\vm}(\vx)\odot_L\partial^\vm_\vx .
\end{equation}
First we show that $P\in\mathbf{D}_{\rho_1\to\rho_2}$. For any fixed $\tau_0$ and $\tau_1$ with $0<\tau_0<\tau_1$, we have
\begin{equation}\label{enovissima2}
\begin{split}
\|u_{\vm} & \|_{\rho_2, (1+\eta)(n+\delta+1)^{\rho_1}(\tau_1+\tau'(\tau_1))}\\
& \leq C(\eta, n,\tau_1,\tau'(\tau_1)) \sum_{\vs\leq \vm}\frac{\| V_{\vm-\vs} (\vx)\|_{\rho_2,\tau_1}  \|F(V_{\vs}(\vx))\|_{\rho_2,\tau'(\tau_1)}}{(\vm-\vs)!\vs!}\\
&  \leq C(\eta, n,\tau_1,\tau'(\tau_1)) \sum_{\vs\leq \vm}\frac{\| V_{\vm-\vs} (\vx)\|_{\rho_1,\tau_1} C(\tau_1) \|V_{\vs}(\vx)\|_{\rho_1,\tau_1}}{(\vm-\vs)!\vs!}\\
& \leq C(\tau_1)C(\eta, n,\tau_1,\tau'(\tau_1)) \sum_{\vs\leq \vm} \frac{C(\tau_0,\tau_1)\tau_0^{-|\vm-\vs|/\rho_1} G_{\rho_1,|\vm-\vs|} C(\tau_0,\tau_1) \tau_0^{-|\vs|/\rho_1} G_{\rho_1,|\vs|}}{(\vm-\vs)! \vs!}\\
&\leq C(\tau_1) C(\eta, n,\tau_1,\tau'(\tau_1))  C(\tau_0,\tau_1)^2 \sum_{\vs\leq\vm} \binom{\vm}{\vs}  \frac{G_{\rho_1,|\vm|}}{\vm !}\tau_0^{-|\vm|/\rho_1}\\
&= C(\tau_1) C(\eta, n,\tau_1,\tau'(\tau_1)) C(\tau_0,\tau_1)^2 \frac{G_{\rho_1,|\vm|}}{\vm!} 2^{|\vm|} \tau_0^{-|\vm|/\rho_1}.
\end{split}
\end{equation}
Here we used Lemma \ref{l12} at the first inequality, \eqref{esupersupernova1} and \eqref{fsupernova1} at the second inequality, Lemma \ref{lsupersupernova1} with $0<\tau_0<\tau_1$ at the third inequality, and Lemma \ref{lsupersupernova2} at the fourth inequality. For a given $\epsilon>0$, we can take $\tau_0>0$ large enough such that
$$ 2\tau_0^{-1/\rho_1}<\epsilon. $$
Then, by choosing $\tau_1$ as $\tau_1>\tau_0$ and by putting
$$
\sigma:=(1+\eta)(n+\delta+1)^{\rho_1}(\tau_1+\tau'(\tau_1)),
$$
 we have
$$
\|u_\vm\|_{\rho_2,\sigma}\leq C'\frac{G_{\rho_1, |\vm|}}{\vm!}\epsilon^{|\vm|}
$$
for any $\vm$, which implies $P\in\mathbf{D}_{\rho_1\to\rho_2}$.

Now we show that $Pf=Ff$ for any $f\in A_{\rho_1}$. First we show that the equality holds for $f\in \mathbb R_n [\vx]$. This will imply the equality holds for any $f\in A_{\rho_1}$ since $\mathbb R_n[\vx]$ is dense in $A_{\rho_1}$ by Proposition \ref{psupersupernova1} and $P$ and $F$ are continuous. If $f\in \mathbb R_n [\vx]$ then there exists $m\in\mathbb N_0$ and $b_{\vs}\in \mathbb R_n$ for $|\vs|\leq m$ such that $f(\vx)=\sum_{|\vs|\leq m} V_{\vs}(\vx) b_\vs$. We have that
$$ F(f(\vx))=\sum_{|\vs|\leq m} F(V_{\vs} (\vx)) b_\vs $$
and
\begin{equation}\label{enovissima4}
\begin{split}
P(f(\vx))& =\sum_{|\vm|\leq m} u_\vm (\vx)\odot_L \partial_\vx^\vm f(\vx)\\
&=\sum_{|\vm|\leq m} u_\vm (\vx)\odot_L \partial_\vx^\vm \left( \sum_{|\vs|\leq m} V_{\vs}(\vx) b_\vs\right)\\
&=\sum_{|\vs|\leq m} \sum_{\vm\leq\vs} u_{\vm}(\vx)\odot_L \partial^\vm_\vx(V_\vs(\vx)) b_\vs\\
&=\sum_{|\vs|\leq m} \sum_{\vm\leq\vs} \sum_{\vp\leq \vm} \frac{\vs ! F(V_\vp(\vx))\odot_L V_{\vm-\vp}(-\vx)\odot_L V_{\vs-\vm}(\vx)}{(\vm-\vp)!(\vs-\vm)!\vp!}  b_\vs\\
&=\sum_{|\vs|\leq m} \sum_{\vp\leq \vs} F(V_{\vp}(\vx)) \odot_L\sum_{0\leq \vm-\vp\leq \vs-\vp} \frac{\vs! (\vs-\vp)! (-1)^{|\vm-\vp|} V_{\vs-\vp}(\vx)}{(\vs-\vp)!(\vm-\vp)! \vp!(\vs-\vm)!} b_\vs\\
&=\sum_{|\vs|\leq\vm} \sum_{\vp\leq \vs} F(V_\vp(\vx))\odot_L \frac{\vs! (1-1)^{|\vs-\vp|}}{(\vs-\vp)! \vp!} V_{\vs-\vp}(\vx) b_\vs\\
&=\sum_{|\vs|\leq m} F(V_\vs(\vx)) b_\vs
\end{split}
\end{equation}
thus $F(f(\bfx))=P(f(\bfx)).$
\end{proof}
\begin{theorem}\label{main2}
Let $\rho_i(r)$ ($i=1,\, 2$) be two proximate orders for orders $\rho_i>0$ satisfying \eqref{supernova}.
\begin{enumerate}
\item Suppose that $P=\sum_{\bfm\in\mathbb N^n_0}u_\bfm(\bfx) \odot_L \partial_{\bfx}^{\bfm}\in\mathbf{D}_{\rho_1\to\rho_2,0}$. For a left monogenic entire function $f\in A_{\rho_1,+0}$,
$$ Pf=\sum_{\bfm\in\mathbb N^n_0}u_\bfm(\bfx) \odot_L \partial_{\bfx}^{\bfm} f $$
converges and $Pf\in A_{\rho_2,+0}$. Moreover, $f\mapsto Pf$ defines a continuous homomorphism $P: A_{\rho_1,+0}\to A_{\rho_2,+0}$.
\item Let $F:A_{\rho_1,+0}\to A_{\rho_2,+0}$ be a continuous right linear homomorphism. Then there is a unique $P\in\mathbf{D}_{\rho_1\to\rho_2,0}$ such that $Ff=Pf$ holds for any $f\in A_{\rho_1,+0}$.
\end{enumerate}
\end{theorem}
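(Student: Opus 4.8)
The plan is to follow the proof of Theorem \ref{main1} step by step, systematically replacing every inductive-limit argument by its projective-limit counterpart, the coefficient bound \eqref{enova1} by \eqref{enova2}, and the density of $\mathbb R_n[\bfx]$ in $A_{\rho_1}$ by its density in $A_{\rho_1,+0}$ (Proposition \ref{psupersupernova1}). As in the proof of Theorem \ref{main1}, by Remark \ref{rnova1} we may assume that $\rho_1$ and $\rho_2$ are normalized proximate orders with $\rho_1(r)\le\rho_2(r)$ for all $r\ge 0$, so that $\|f\|_{\rho_2,\tau}\le\|f\|_{\rho_1,\tau}$ for every $f$ and every $\tau>0$, cf. \eqref{esupersupernova1}.

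For part $(1)$, fix $P=\sum_{\bfm}u_\bfm(\bfx)\odot_L\partial_\bfx^\bfm\in\mathbf{D}_{\rho_1\to\rho_2,0}$ and $f\in A_{\rho_1,+0}$, so that $f\in A_{\rho_1,\tau}$ for every $\tau>0$. Given an arbitrary target index $\sigma'>0$, I run the chain of inequalities \eqref{enovissima1} (invoking Lemma \ref{lnova1} with Remark \ref{rsupersupernovissima1}, and Lemma \ref{l12}), but now I first fix $\eta>0$ small and then choose the norm index $\sigma$ appearing in \eqref{enova2} so small that $(1+\eta)(n+\delta+1)^{\rho_2}\sigma<\sigma'/2$; this choice of $\sigma$ determines $\lambda=\lambda(\sigma)$ and the constant in \eqref{enova2}. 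Next I choose $\tau>0$ small enough that $(1+\eta)(n+\delta+1)^{\rho_2}k\tau<\sigma'/2$ and that the geometric series at the end of \eqref{enovissima1} converges, which holds once $\tau<c_0\lambda^{-\rho_1}$ for a constant $c_0$ depending only on $n$, $k$ and $\rho_1$. Since $f\in A_{\rho_1,\tau}$ for this $\tau$, the series defining $Pf$ converges in $A_{\rho_2,\tau''}$ with $\tau''=(1+\eta)(n+\delta+1)^{\rho_2}(\sigma+k\tau)<\sigma'$ and $\|Pf\|_{\rho_2,\tau''}\le C\|f\|_{\rho_1,\tau}$; the inclusion $A_{\rho_2,\tau''}\hookrightarrow A_{\rho_2,\sigma'}$ then gives $Pf\in A_{\rho_2,\sigma'}$ and the continuity of $P:A_{\rho_1,\tau}\to A_{\rho_2,\sigma'}$. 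Letting $\sigma'$ range over $(0,+\infty)$ yields $Pf\in A_{\rho_2,+0}$ and, by the universal property of the projective limit, the continuity of $P:A_{\rho_1,+0}\to A_{\rho_2,+0}$.

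For part $(2)$, let $F:A_{\rho_1,+0}\to A_{\rho_2,+0}$ be a continuous right linear homomorphism. Both spaces are Fr\'echet spaces, being countable projective limits of the Banach spaces $A_{\rho_i,1/k}$ ($k\in\mathbb N$), cf. Lemma \ref{lsupernova1}, so the continuity of $F$ gives, for each $\sigma'>0$, constants $\tau(\sigma')>0$ and $C(\sigma')>0$ with $\|Ff\|_{\rho_2,\sigma'}\le C(\sigma')\|f\|_{\rho_1,\tau(\sigma')}$ for all $f\in A_{\rho_1,+0}$; this is the projective-limit substitute for \eqref{fsupernova1}. Define the entire left monogenic functions (finite sums, hence well defined) $u_\bfm(\bfx)=\sum_{\vs\le\bfm}\frac{F(V_\vs(\bfx))\odot_L V_{\bfm-\vs}(-\bfx)}{(\bfm-\vs)!\,\bfm!}$ and set $P=\sum_{\bfm}u_\bfm(\bfx)\odot_L\partial_\bfx^\bfm$. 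To check \eqref{enova2}, fix $\sigma>0$, then $\eta>0$ and $\tau_1,\tau_2>0$ so small that $(1+\eta)(n+\delta+1)^{\rho_1}(\tau_1+\tau_2)\le\sigma$; running the chain \eqref{enovissima2} with the target index of $F$ taken equal to $\tau_2$ (so its source index is $\tau(\tau_2)$, possibly large), and using Lemma \ref{l12} for the $\odot_L$-product, the identity $\|V_{\bfm-\vs}(-\bfx)\|=\|V_{\bfm-\vs}(\bfx)\|$ combined with \eqref{esupersupernova1}, Lemma \ref{lsupersupernova1} with a common base $\tau_0<\min(\tau_1,\tau(\tau_2))$ for both Fueter polynomials, the supermultiplicativity of $\{G_{\hat{\rho}_1,q}\}$ (Lemma \ref{lsupersupernova2}), and $\sum_{\vs\le\bfm}\binom{\bfm}{\vs}=2^{|\bfm|}$, one obtains $\|u_\bfm\|_{\rho_2,\sigma}\le C\,\frac{G_{\hat{\rho}_1,|\bfm|}}{\bfm!}\lambda^{|\bfm|}$ with $\lambda=2\tau_0^{-1/\rho_1}$ independent of $\bfm$. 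Hence $P\in\mathbf{D}_{\rho_1\to\rho_2,0}$, and by part $(1)$, $P$ defines a continuous homomorphism $A_{\rho_1,+0}\to A_{\rho_2,+0}$. Finally $Pf=Ff$: for $f\in\mathbb R_n[\bfx]$ this is the purely algebraic identity \eqref{enovissima4}, which carries over verbatim, and since $\mathbb R_n[\bfx]$ is dense in $A_{\rho_1,+0}$ (Proposition \ref{psupersupernova1}) and $P$ and $F$ are continuous, $Pf=Ff$ for all $f\in A_{\rho_1,+0}$. Uniqueness of $P$ follows from the injectivity of the correspondence $\hat{D}\leftrightarrow\operatorname{Hom}_{\mathbb R_n}(\mathbb R_n[\bfx],\mathbb R_n[[\bfx]])$ in Proposition \ref{pnova1}.

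I expect the main obstacle to be exactly this reparametrization. In Theorem \ref{main1} the continuity of $F$ on an inductive limit supplies a target index $\tau'(\tau)$ as a function of a source index $\tau$, and one lets $\tau\to\infty$ to make the index appearing in \eqref{enova1} large; here the continuity of $F$ on a projective limit supplies only a source index $\tau(\sigma')$ as a function of a target index $\sigma'$, so one must feed a freely chosen small target index $\tau_2$ into the estimate for $u_\bfm$ in order to make the resulting norm index $(1+\eta)(n+\delta+1)^{\rho_1}(\tau_1+\tau_2)$ smaller than any prescribed $\sigma>0$, at the cost of a large but $\bfm$-independent constant $\lambda$ in \eqref{enova2}; the analogous flip of roles between source and target indices occurs in part $(1)$. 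Once these parameter dependencies are organized correctly, Lemmas \ref{l11}, \ref{l12}, \ref{lnova1} and the other estimates of Section \ref{Prel} apply just as in the proof of Theorem \ref{main1}.
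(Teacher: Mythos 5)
Your proposal is correct and follows essentially the same route as the paper's own proof: normalize via Remark \ref{rnova1}, rerun the chain \eqref{enovissima1} with the $\sigma$-before-$\tau$ order of choice for part (1), use the Fréchet-space form of continuity (target index $\mapsto$ source index) to replace \eqref{fsupernova1} and feed small indices into Lemmas \ref{l12}, \ref{lsupersupernova1}, \ref{lsupersupernova2} to verify \eqref{enova2} for the $u_{\bfm}$ defined by \eqref{dnovissima2}, then finish with \eqref{enovissima4} and the density from Proposition \ref{psupersupernova1}. The only cosmetic difference is that the paper applies Lemma \ref{l12} with both factors at the common index $\sigma'/2$, whereas you allow two independent small indices $\tau_1,\tau_2$, which changes nothing.
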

\begin{proof}
Again we can make a substitution of proximate orders as in Remark \ref{rnova1}, and we may assume from the beginning that $\rho_i$ $(i=1,\, 2)$ are normalized proximate orders satisfying \eqref{esupersupernova1}.

$(1)$ We assume condition \eqref{enova2} for $(u_\vm(\vx))_{\vm\in\mathbb N^n_0}$ and denote by $\lambda_\sigma$ the constant $\lambda$ given there according to $\sigma$. Similar computations as in \eqref{enovissima1} yield
\begin{equation}\label{enovissima2}
\begin{split}
&\sum_{\mathbf{m}\in\mathbb N^n_0}\|u_{\mathbf{m}} \odot_L \partial^{\mathbf{m}}_{\mathbf{x}} f\|_{\rho_2, (1+\eta)(n+\delta+1)^{\rho_2}(\sigma+k\tau)}\\
&\leq C(n,\eta,\sigma, k\tau) \sum_{\mathbf{m}\in\mathbb N^n_0} \|u_{\mathbf{m}}\|_{\rho_2,\sigma} \|\partial^{\mathbf{m}}_{\mathbf{x}} f\|_{\rho_2,k\tau}\\
&\leq C(n,\eta,\sigma, k\tau) \sum_{\mathbf{m}\in\mathbb N^n_0} \|u_{\mathbf{m}}\|_{\rho_2,\sigma} \|\partial^{\mathbf{m}}_{\mathbf{x}} f\|_{\rho_1,k\tau}\\
&\leq C'(n,\eta,\sigma, k\tau) \sum_{\mathbf{m}\in\mathbb N^n_0} \frac{G_{\rho_1,|\mathbf{m}|}}{\mathbf{m}!} \lambda_\sigma^{|\mathbf{m}|} \|f\|_{\rho_1,\tau} \frac{\mathbf{m}!}{G_{\rho_1,|\mathbf{m}|}}(2k(c(n,\vm))^{\rho_1/|\vm|} \tau)^{|\mathbf{m}|/\rho_1}\\
&= C'(n,\eta,\sigma, k\tau) \|f\|_{\rho,\tau} \sum_{q\in\mathbb N_0} \sum_{|\mathbf{m}|=q} \lambda_\sigma^q(2k(c(n,\vm))^{\rho_1/q} \tau)^{q/\rho_1}\\
&\leq C'(n,\eta,\sigma, k\tau) \|f\|_{\rho,\tau}  \sum_{q=0}^{+\infty} (q+1)^{n-1} ( C(n) (2k\tau)^{1/\rho_1} \lambda_\sigma)^{q},
\end{split}
\end{equation}
for any $\sigma,\, \tau>0,$ $f\in A_{\rho_1,+0}$. Here $k=k(\rho_1)$ depends only on $\rho_1$, and $C'(n,\eta,\sigma, k\tau)$ is independent of $f$. Note that the last sum is finite if $C(n) (2k\tau)^{1/\rho_1} \lambda_\sigma<1$. In fact, we may first choose $\sigma$ as
$$0<\sigma<\frac \epsilon{2(1+\eta)(n+\delta+1)^{\rho_2}},$$
which determines $\lambda_\sigma$, and then secondly choose $\tau>0$ as
$$ k\tau<\frac{\epsilon}{2(1+\eta)(n+\delta+1)^{\rho_2}}, \quad C(n) (2k\tau)^{1/\rho_1} \lambda_\sigma <1.$$
Therefore, for any $\epsilon>0$, there exist $C''(\epsilon)>0$ and $\tau(\epsilon)>0$ such that
$$ \|P(f)\|_{\rho_2, \epsilon}\leq C''(\epsilon) \|f\|_{\rho_1, \tau(\epsilon)},\quad \textrm{for $f\in A_{\rho,+0}$}, $$
which implies that $P:A_{\rho_1,+0}\to A_{\rho_2,+0}$ is continuous.

$(2)$ For a given continuous homomorphism $F:A_{\rho_1,+0}\to A_{\rho_2,+0}$, we construct
$$P=\sum_{\vm} u_\vm(\vx)\odot_L\partial_\vx^\vm$$
via \eqref{dnovissima2} in the same way as in the proof of Theorem \ref{main1} $(2)$. Let us show the convergence of $P$ in $A_{\rho_2,+0}$ together with the estimates. For any $\sigma'>0$, by continuity of $F$ there exists a $\tau_1>0$ and a $C(\tau_1)>0$ such that
\begin{equation}\label{extranova1}
\|F(f)\|_{\rho_2,\sigma'/2} \leq C(\tau_1)\|f\|_{\rho_1,\tau_1},\quad \textrm{for $f\in A_{\rho_1,+0}$.}
\end{equation}
We fix $\sigma>0$. Take $\tau_0$ with $0<\tau_0<\min\{\tau_1,\sigma'/2\}$ where $\sigma'=\frac{\sigma}{(1+\eta)(n+\delta+1)^{\rho_1}}$. Similarly to the proof of \eqref{enovissima2}, we have
\begin{equation}\label{enovissima3}
\begin{split}
\|u_{\vm}\|_{\rho_2,\sigma} & \leq \|u_{\vm}  \|_{\rho_2, (1+\eta)(n+\delta+1)^{\rho_1} \sigma'}\\
& \leq C(\eta, n,\tau_1,\tau'(\tau_1)) \sum_{\vs\leq \vm}\frac{\| V_{\vm-\vs} (\vx)\|_{\rho_2,\sigma'/2}  \|F(V_{\vs}(\vx))\|_{\rho_2,\sigma'/2}}{(\vm-\vs)!\vs!}\\
&  \leq C(\eta, n,\tau_1,\tau'(\tau_1)) \sum_{\vs\leq \vm}\frac{\| V_{\vm-\vs} (\vx)\|_{\rho_1,\sigma'/2} C(\tau_1) \|V_{\vs}(\vx)\|_{\rho_1,\tau_1}}{(\vm-\vs)!\vs!}\\
& \leq C(\eta, n,\tau_1,\tau'(\tau_1)) \sum_{\vs\leq \vm} \frac{C(\tau_0,\sigma'/2)\tau_0^{-|\vm-\vs|/\rho_1} G_{\rho_1,|\vm-\vs|} C(\tau_1) C(\tau_0,\tau_1) \tau_0^{-|\vs|/\rho_1} G_{\rho_1,|\vs|}}{(\vm-\vs)! \vs!}\\
&\leq C(\eta, n,\tau_1,\tau'(\tau_1)) C(\tau_1) C(\tau_0,\sigma'/2) C(\tau_0,\tau_1) \sum_{\vs\leq\vm} \binom{\vm}{\vs}  \frac{G_{\rho_1,|\vm|}}{\vm !}\tau_0^{-|\vm|/\rho_1}\\
&= C(\eta, n,\tau_1,\tau'(\tau_1)) C(\tau_1) C(\tau_0,\sigma'/2) C(\tau_0,\tau_1) \frac{G_{\rho_1,|\vm|}}{\vm!} 2^{|\vm|} \tau_0^{-|\vm|/\rho_1}.
\end{split}
\end{equation}
Here we used Lemma \ref{l12} at the first inequality, \eqref{esupersupernova1} and \eqref{extranova1} at the second, Lemma \ref{lsupersupernova1} twice with $0<\tau_0<\sigma'/2$ and $0<\tau_0<\tau_1$ at the third, and Lemma \ref{lsupersupernova2} at the fourth.

Therefore, by defining
$$ \lambda:=2\tau_0^{-1/\rho_1}, $$
we have
$$ \|u_\vm(\vx)\|_{\rho_2,\sigma} \leq C'\frac{G_{\rho_1,|\vm|}}{\vm!} \lambda^{|\vm|}$$
for any $\vm$. Since $\sigma>0$ can be chosen arbitrarily, this implies $P\in\mathbf{D}_{\rho_1\to\rho_2,0}$.

The remaining thing is to show $F(f)=P(f)$ for $f\in A_{\rho_1,+0}$. This can be done completely in the same way as in the case of normal type, that is, the equality for polynomial $f$ due to \eqref{enovissima4} can be extended to $A_{\rho_1,+0}$ by continuity, since polynomials form a dense subset of $A_{\rho_1,+0}$, for which we again refer to Proposition \ref{psupersupernova1}.
\end{proof}

\section{Declarations} The authors declare that there are no competing interests and that there is no conflict of interests. No data sets were used and no fundings have been received.

\end{document}